\title{Constructing local models\\ for Lagrangian torus fibrations}
\author{Jonathan David Evans and Mirko Mauri}
\newcommand{\FF}{\mathfrak{f}}
\newcommand{\GG}{\mathfrak{g}}
\newcommand{\CC}{\mathbb{C}}
\newcommand{\QQ}{\mathbb{Q}}
\newcommand{\RR}{\mathbb{R}}
\newcommand{\ZZ}{\mathbb{Z}}
\newcommand{\Link}{\operatorname{Link}}
\newcommand{\Cone}{\operatorname{Cone}}
\newcommand{\Spec}{\operatorname{Spec}}
\newcommand{\nbhd}{\operatorname{nbhd}}
\newcommand{\cp}[1]{\mathbb{CP}^{#1}}
\newcommand{\OP}[1]{\mathrm{#1}}
\newcommand{\ttt}{\mathfrak{t}}
\theoremstyle:=definition,remark,plain\do{%
\expandafter\g@addto@macro\csname th@\theoremstyle\endcsname{%
\addtolength\thm@preskip\parskip
}%
}
\newtheorem{thm}{Theorem}[section]
\newtheorem{lem}[thm]{Lemma}
\newtheorem{Proposition}[thm]{Proposition}
\theoremstyle{remark}
\newtheorem{Remark}[thm]{Remark}
\theoremstyle{definition}
\newtheorem{Definition}[thm]{Definition}
\newtheorem{example}[thm]{Example}
\newtheorem{Assumption}[thm]{Assumption}
\crefname{thm}{Theorem}{Theorems}
\Crefname{thm}{Theorem}{Theorems}
\crefname{Lemma}{Lemma}{Lemmas}
\Crefname{Lemma}{Lemma}{Lemmas}
\crefname{lem}{Lemma}{Lemmas}
\Crefname{lem}{Lemma}{Lemmas}
\crefname{Corollary}{Corollary}{Corollaries}
\Crefname{Corollary}{Corollary}{Corollaries}
\crefname{Claim}{Claim}{Claims}
\Crefname{Claim}{Claim}{Claims}
\crefname{Proposition}{Proposition}{Propositions}
\Crefname{Proposition}{Proposition}{Propositions}
\crefname{Assumption}{Assumption}{Assumptions}
\Crefname{Assumption}{Assumption}{Assumptions}
\crefname{Remark}{Remark}{Remarks}
\Crefname{Remark}{Remark}{Remarks}
\crefname{Definition}{Definition}{Definitions}
\Crefname{Definition}{Definition}{Definitions}
\crefname{example}{Example}{Examples}
\Crefname{example}{Example}{Examples}
\crefname{Exercise}{Exercise}{Exercises}
\Crefname{Exercise}{Exercise}{Exercises}
\begin{document}
\maketitle
\begin{abstract}
We give a construction of Lagrangian torus fibrations with
controlled discriminant locus on certain affine varieties. In
particular, we apply our construction in the following ways:
\begin{itemize}
\item We find a Lagrangian torus fibration on the 3-fold negative vertex whose
discriminant locus has codimension 2; this provides a local model
for finding torus fibrations on compact Calabi-Yau 3-folds with
codimension 2 discriminant locus.
\item We find a Lagrangian torus fibration on a neighbourhood of the one-dimensional stratum of a simple
normal crossing divisor (satisfying certain conditions) such that
the base of the fibration is an open subset of the cone over the dual complex of the divisor. This can be used to construct an analogue of the
non-archimedean SYZ fibration constructed by Nicaise, Xu and Yu.

\end{itemize}
\end{abstract}
\section{Introduction}

The Strominger-Yau-Zaslow conjecture
\cite{SYZ,GrossWilson,KontsevichSoibelman} asserts that a Calabi-Yau
manifold admitting a degeneration to a large complex structure limit
also admits a Lagrangian torus fibration. Indeed, the fibres are
special Lagrangian and the Calabi-Yau metric should undergo
Gromov-Hausdorff collapse along the fibration as one approaches the
limit. If we take a minimal semistable reduction of the degeneration,
so that the singular fibre is a reduced simple normal crossing
variety, the base of the SYZ torus fibration should be the dual
complex\footnote{called {\em Clemens complex} in
\cite{KontsevichSoibelman}.} of the singular fibre {\cite[Section
3.3]{KontsevichSoibelman}}.

In this paper, we focus on the problem of finding Lagrangian torus
fibrations (not necessarily special) on affine manifolds \(W\)
arising as \(X\setminus Y\) where \(X\) is a complex projective
variety and \(Y\) is a simple normal crossing divisor. These should be
thought of as local models for Lagrangian torus fibrations on compact
Calabi-Yaus or other varieties (for example surfaces of general
type). The idea is very simple: construct a Lagrangian torus fibration
on the contact ``link at infinity'' of \(W\), extend it to the
interior of \(W\) using Liouville flow, and add a central fibre
corresponding to the Lagrangian skeleton of \(W\). If \(B\) is the
base of the Lagrangian torus fibration on the link then we are careful
to identify \(B\) with the dual complex of \(Y\). The base of the
Lagrangian torus fibration on \(W\) is then \(\Cone(B)\).

The singularities of the Lagrangian torus fibrations constructed in
this way are usually too severe to be of much use. However, in some
cases, this construction yields Lagrangian fibrations with mild
singularities which are difficult to construct any other way. We
demonstrate this with two examples:
\begin{itemize}
\item Mikhalkin \cite{Mikhalkin} introduced the {\em tropicalisation}
  map which, amongst other things, gives a topological torus fibration
  from a variety to its tropicalisation. In particular, one obtains a
  map from the 4-dimensional pair-of-pants to the cell complex which
  is the cone on the 1-skeleton of a tetrahedron. In Section
  \ref{sct:pop}, we give a version of this map with Lagrangian
  fibres. We anticipate that this will be useful for constructing
  Lagrangian torus fibrations on surfaces of general type.
\item We give a Lagrangian torus fibration on the affine variety
\begin{equation}\label{eq:negv}W=\{(x,y,u_1,u_2)\in\CC^2\times(\CC^*)^2\ :\
  xy=u_1+u_2+1\}\end{equation} such that the base is a 3-ball and the
discriminant locus (set of singular fibres) is a Y-graph. The
existence of such a Lagrangian torus fibration was conjectured by
Gross \cite{Gross}, who gave a topological torus fibration on this
space with the same discriminant locus. The singular fibre over the
vertex of the graph appears to be the same as the one Gross
conjectured (a space with Euler characteristic \(-1\) which we will
call the {\em Gross fibre}, see {\cite[Example 2.6.(4)]{Gross}}). %see the \(n=\ell=3\) ``conjectural dual fibre'' in {\cite[Remark 1.4]{GrossTop}}).
\end{itemize}
This second example is of particular significance. In the early days
of mirror symmetry, there was an expectation that the discriminant
locus for a Lagrangian torus fibration on a Calabi-Yau 3-fold should
be a trivalent graph. The only fibres with nontrivial Euler
characteristic should live over the vertices of this graph, so there
should be two local models where the discriminant is a Y-graph: one
where the fibre over the vertex has Euler characteristic \(1\), and
one where it has Euler characteristic \(-1\) (as there are Calabi-Yau
3-folds with both positive and negative Euler characteristic). The
local model with Euler characteristic \(1\) is easy to construct (see
{\cite[Example 1.2]{GrossTop}}).

An explicit Lagrangian fibration on the variety given by Equation
\eqref{eq:negv} was found by Casta\~{n}o-Bernard and Matessi
\cite{CBM1,CBM2}, but the discriminant locus was a codimension 1
thickening of the Y-graph. Indeed, Joyce \cite{Joyce} had earlier
argued that the property of having codimension 1 discriminant locus
should be generic amongst singularities of special Lagrangian torus
fibrations. For this reason, attention in recent years has focused on
understanding the case where the discriminant locus has codimension
1. For example, the powerful method introduced by W.-D. Ruan
\cite{Ruan1,Ruan2,Ruan3} for producing Lagrangian torus fibrations on
compact Calabi-Yaus produces fibrations with codimension 1
discriminant locus unless special care is taken.

In light of our construction, it seems reasonable to expect
(continuous) Lagrangian torus fibrations on compact Calabi-Yau 3-folds
with codimension 2 discriminant locus, as originally expected.

An alternative approach to the SYZ conjecture is to look for
non-archimedean analogues of Lagrangian torus fibrations instead of
special Lagrangian fibrations. This was first suggested in {\cite[\S
  3.3]{KontsevichSoibelman}} and \cite{KontsevichSoibelman2006}, and
details of this approach are explained in \cite{NicaiseXuYu}. We give
an overview in Section \ref{sct:nonarch}, which explains how it
relates to the ideas in the current paper. According to
{\cite[Conjecture 1]{KontsevichSoibelman2006}}, the smooth part of the
Gromov-Hausdorff limit of maximally degenerating families of
Calabi-Yau varieties should carry an integral affine structure, of
which it should be possible to give a purely algebraic description;
see \cite[Conjecture 3]{KontsevichSoibelman2006} and \cite[Theorem
6.1]{NicaiseXuYu}. In Section \ref{sct:nonarch}, we show how the
integral affine structure constructed in \cite[Theorem
6.1]{NicaiseXuYu} arises essentially from a Lagrangian torus fibration
on an open subset of the total space of the degenerating family.

\subsection{Outline}

\begin{itemize}
\item In Sections \ref{sct:stratifications}--\ref{sct:ltf}, we give
  some basic definitions and setup; in particular, we define precisely
  what we mean by a Lagrangian torus fibration and a coisotropic
  fibration.
\item In Sections \ref{sct:ncd}--\ref{sct:sympnbhd}, we explain how to
  construct a Lagrangian torus fibration on an affine variety
  $X \setminus Y$, and we discuss some obstructions to lift Lagrangian
  torus fibrations from subvarieties of $Y$.
\item In Section \ref{sct:weinstein}, we apply the previous
  construction to find Lagrangian torus fibrations on the
  4-dimensional pair-of-pants and on the negative vertex (the variety
  defined by Equation \eqref{eq:negv}).
\item In Section \ref{sct:dualcomplex}, we discuss the dual complex of
  a simple normal crossing divisor and prove some results about the
  topology of the dual complex under the assumption that the
  complement of the simple normal crossing divisor is affine.
\item In Section \ref{sct:evaluationmap}, we construct a coisotropic
  fibration on the contact boundary of \(X\setminus Y\) such that the
  base of the fibration is the dual complex of the compactifying
  divisor \(Y\).

\item In Section \ref{sct:nonarch}, we explain a construction of
  Lagrangian torus fibrations with codimension 2 discriminant locus
  which is a symplectic analogue of the non-archimedean SYZ fibration
  constructed in \cite{NicaiseXuYu}.

\end{itemize}
\subsection{Acknowledgements}

We would like to acknowledge useful comments from the anonymous
referees, which have helped us improve the exposition in this paper.

JE is supported by EPSRC Standard Grant EP/P02095X/1 and would like to
acknowledge useful conversations with Ivan Smith, Paul Hacking, Peng
Zhou, Renato Vianna, Dmitry Tonkonog, Mark McLean and Mohammed
Tehrani.

MM is supported by the Max Planck Institute for Mathematics, and the
Engineering and Physical Sciences Research Council [EP/L015234/1]. The
EPSRC Centre for Doctoral Training in Geometry and Number Theory (The
London School of Geometry and Number Theory), University College
London. MM expresses his gratitude to his advisor Paolo Cascini for
his support and advice, and to Enrica Mazzon and Johannes Nicaise for
helpful discussions on Section \ref{sct:nonarch}.

\section{Definitions and setup}

\subsection{Stratifications}
\label{sct:stratifications}
\begin{Definition}\label{dfn:stratification}
Recall that a stratification of a topological space \(B\) is a
filtration
\[\emptyset=B_{-1}\subset B_0\subset\cdots\subset B_d\subset
B_{d+1}\subset\cdots\subset B,\]
where each \(B_d\) is a closed subset such that:
\begin{itemize}
\item the \(d\)-stratum \(S_d(B):=B_d\setminus B_{d-1}\) is a smooth
\(d\)-dimensional manifold for each \(d\) (possibly empty),
\item \(B=\bigcup_{d\geq 0}B_d\).
\end{itemize}
We say that \(B\) is finite-dimensional if the \(d\)-stratum is
empty for sufficiently large \(d\), and we say that \(B\) is
\(n\)-dimensional if \(B\) is finite-dimensional and \(n\) is the
maximal number for which \(S_n(B)\) is nonempty, in which case we
call \(S_n(B)\) the {\em top stratum}, \(S_{top}(B)\).

\end{Definition}
\begin{Definition}[Cones]\label{dfn:cone}
Given a stratified space \(B\), the {\em cone over \(B\)}, denoted
\(\Cone(B)\), is the stratified space
\[\Cone(B)=([-\infty,\infty)\times B)/(\{-\infty\}\times B),\] whose
strata are the open cones on the strata of \(B\) and the singleton
stratum comprising the cone point \([\{-\infty\}\times B]\).

\end{Definition}
\subsection{Lagrangian torus fibrations}
\label{sct:ltf}
We are interested in proper continuous maps \(F\colon X\to B\) with
connected fibres where \(B\) is a stratified space and, either:
\begin{itemize}
\item \((X,\omega)\) is a symplectic manifold of dimension \(2n\), or
\item \((X,\alpha)\) is a contact manifold of dimension \(2n-1\) with a
chosen contact 1-form \(\alpha\). In this case, we write
\(\omega=d\alpha\).

\end{itemize}
\begin{Definition}[Coisotropic fibration]\label{dfn:coisotropicfibration}
  We call \(F\colon X\to B\) a {\em coisotropic fibration} if it is a
  proper continuous map with connected fibres such that the fibres
  over the \(k\)-dimensional stratum are \(\omega\)-coisotropic
  submanifolds (possibly with boundary and corners) of codimension
  \(k\). We say a coisotropic fibration is {\em generically
    Lagrangian} if \(F\) is a smooth submersion over \(S_{top}(B)\)
  and the fibres over \(S_{top}(B)\) are Lagrangian. (Note that a
  Lagrangian submanifold of a contact manifold is one on which
  \(d\alpha\) vanishes.)

\end{Definition}
\begin{Remark}
If \(F\) is generically Lagrangian then \(\dim B=n\), respectively
\(\dim B=n-1\), if \(X\) is symplectic, respectively contact.

\end{Remark}
\begin{Definition}[Lagrangian torus fibration]\label{dfn:lagrangiantorusfibration}
We call \(F\colon X\to B\) a {\em Lagrangian torus fibration} if
\begin{itemize}
\item \(F\) is a smooth submersion over the top stratum, with Lagrangian
fibres.
\item the fibres over other strata are themselves stratified spaces with
isotropic strata.
\end{itemize}
We call the complement of the top stratum the {\em discriminant
locus} \(Discr(F)=B\setminus S_{top}(B)\).

\end{Definition}
\begin{Definition}[Integral affine structure]\label{dfn:Zaff}
Let \(M\) be a manifold of
dimension \(n\). The following definitions are equivalent.
\begin{enumerate}
\item An {\em integral affine structure} is a maximal atlas of \(M\)
  such that the transition functions belong to
  \(\operatorname{GL}(n, \ZZ)\rtimes \ZZ^n\).
\item An {\em integral affine structure} is a sheaf
  \(\operatorname{Aff}_M\) of continuous functions on \(M\) such that
  \((M, \operatorname{Aff}_M)\) is locally isomorphic to \(\RR^n\)
  endowed with the sheaf of degree one polynomials with integral
  coefficients.
\item An {\em integral affine structure} is the datum of a
  torsion-free flat connection on the tangent bundle $TM$, and of a
  lattice of flat sections of maximal rank in $TM$.
\end{enumerate}
See {\cite[\S 2.1]{KontsevichSoibelman2006}} for a proof of the
equivalence.
\end{Definition}
\begin{Remark}
The Arnold-Liouville theorem tells us that the Lagrangian fibres of
\(F\) are tori and, if \(X\) is symplectic, that the top stratum of
\(B\) inherits an integral affine structure.

\end{Remark}
\begin{Remark}
In the definition of coisotropic fibration, the fibres can be very
large. For example, the fibres over the 0-strata of \(B\) are
codimension zero submanifolds with boundary. One example of a
coisotropic fibration would be taking a Lagrangian torus fibration
\(X\to B\) and postcomposing with a map which collapses an open set
in \(B\) containing the discriminant locus. This motivates the
following definition:

\end{Remark}
\begin{Definition}\label{dfn:refinement}
We will say that a Lagrangian torus fibration \(F_1\colon X\to B_1\) is
a {\em refinement} of a coisotropic fibration \(F_2\colon X\to B_2\)
if there is a continuous surjection \(g\colon B_1\to B_2\) such that
\(F_2=g\circ F_1\).

\end{Definition}
We finish this section with some remarks on Lagrangian torus
fibrations on contact manifolds.

\begin{Remark}\label{rmk:reeb}
Recall that the Reeb flow for a contact manifold \((M,\alpha)\) is
always tangent to a \(d\alpha\)-Lagrangian submanifold of \(M\).

\end{Remark}
\begin{Remark}
Suppose that \(f\colon X\to B\) is a Lagrangian torus fibration on a
contact \((2n-1)\)-manifold \((X,\alpha)\). The action integrals
\(\int_{C_1}\alpha,\ldots,\int_{C_n}\alpha\) of the contact 1-form
around a basis for the homology of a general fibre define a map from
the universal cover of the top stratum \(S_{top}(B)\) to
\(\RR^n\). This map necessarily avoids the origin. Otherwise the
pullback of \(\alpha\) to the fibre is a nullhomologous closed
1-form, which therefore vanishes somewhere on the fibre. However it
must always evaluate positively on the Reeb vector, which is tangent
to the fibre by Remark \ref{rmk:reeb}.

\end{Remark}
\begin{Remark}
  In the context of the previous remark, the action integrals
  \(e^r\int_{C_1}\alpha\), \(\ldots\), \(e^r\int_{C_n}\alpha\) on the
  symplectisation give local action coordinates for the Lagrangian
  torus fibration \((r,f)\colon\RR\times X\to\RR\times B\) on the
  symplectisation \((\RR\times X,d(e^r\alpha))\).

\end{Remark}
\begin{Remark}
  The following rigidity result for Lagrangian torus fibrations on
  contact manifolds with particular conditions on the Reeb dynamics
  will apply to some examples later (see Remark \ref{rmk:rigidity}).
\end{Remark}

\begin{lem}\label{lma:rigidity}
Let \((M,\alpha)\) be a contact manifold with contact 1-form
\(\alpha\) and let \(F_1\colon M\to B_1\), \(F_2\colon M\to B_2\) be
Lagrangian torus fibrations. Suppose that for an open set \(U\subset
(B_1)_{top}\) of regular fibres there is a dense set \(V\subset U\)
such that, for any \(v\in V\), the fibre \(F_1^{-1}(v)\) contains a
dense Reeb orbit. Then \(F_2\) is constant along the fibres of
\(F_1\). In other words, the fibration \(F_2\) coincides with
\(F_1\) over \(F_1^{-1}(U)\).
\end{lem}
\begin{proof}
The fibre of \(F_2\) through a point \(x\in F_1^{-1}(v)\), \(v\in
V\), necessarily contains the closure of the Reeb orbit through
\(x\), so it contains the whole fibre \(F_1^{-1}(v)\) (recall that
we have a standing assumption that fibres are connected). Since this
is true for a dense set \(V\subset U\), and since \(F_1\) and
\(F_2\) are continuous, this means \(F_2|_{F_1^{-1}(u)}\) is
constant for all \(u\in U\), as required. \qedhere

\end{proof}
\subsection{Affine varieties}
\label{sct:ncd}

Let \(X\) be a smooth complex projective variety of complex dimension
\(n\). Let \(Y\subset X\) be a simple normal crossing divisor with
components \(Y_i\), \(i\in I\), for some indexing set \(I\). Suppose
that there is an ample divisor \(\sum_{i\in I} m_iY_i\), \(m_i\geq
1\), fully supported on \(Y\). Let \(L_i\) be the holomorphic line
bundle with \(c_1(L_i)=Y_i\), so that \(Y_i\) is the zero locus of a
transversely vanishing section \(s_i\in H^0(X,L_i)\). Let
\(|\cdot|_i\) be a Hermitian metric on \(L_i\) and let \(\nabla_i\) be
the corresponding Chern connection (uniquely determined by the
condition of being a metric connection for \(|\cdot|_i\) compatible
with the holomorphic structure of \(E\)) with curvature
\(F_{\nabla_i}\). By {\cite[p.148]{GH}}, we can choose the Hermitian
metrics in such a way that \(\omega:=-2\pi i\sum_{i\in
I}m_iF_{\nabla_i}\) is a K\"{a}hler form on \(X\) with K\"{a}hler
potential \[\psi:=-\sum_{i\in I}m_i\log|s_i|_i\colon W\to\RR\] defined
on \(W=X\setminus Y\), that is \(\omega=2\pi
i\partial\bar{\partial}\psi\). In other words, \(\psi\) is a
plurisubharmonic exhausting function on \(W\).

We now have a standard package of geometric objects associated to
\(\psi\):
\begin{itemize}
\item its gradient (with respect to the K\"{a}hler metric on \(W\)) is a
{\em Liouville vector field} \(Z:=\nabla\psi\); in other words
\(\mathcal{L}_Z\omega=\omega\). The 1-form
\(\lambda:=\iota_Z\omega\) is a primitive for \(\omega\) called the
\emph{Liouville form}. The flow \(\phi_t\) for time \(t\) along
\(Z\) is called the \emph{Liouville flow}: it dilates the symplectic
form in the sense that \(\phi_t^*\omega=e^t\omega\).
\item the critical locus of \(\psi\) is compact {\cite[Lemma
4.3]{Seidel}}.
\item if \(\psi\) is Morse or Morse-Bott, the union of all downward
manifolds of the Liouville flow is isotropic (called the
\emph{skeleton}, \(\Sigma\)); see {\cite[Proposition
11.9]{CieliebakEliashberg}}. Since the critical locus of \(\psi\) is
compact, this can be achieved by perturbing the Hermitian metrics on
the bundles \(L_i\) (and hence the symplectic form) over a compact
subset of \(W\).
\item if \(M\subset W\) is a \((2n-1)\)-dimensional submanifold transverse
to the Liouville flow then \(M\) inherits a contact form \(\alpha\)
pulled back from the Liouville form \(\lambda\). Moreover, if \(M\)
is disjoint from the skeleton \(\Sigma\) then the complement
\(W\setminus\Sigma\) is symplectomorphic to a subset of the
symplectisation \(SM\) of the form \(\{(r,x)\in\RR\times M\ :\
r<T(x)\}\), where \(T(x)=\sup\{t\ :\ \phi_t(x)\mbox{ is
defined}\}\). In particular, if \(Z\) is complete then \(W\setminus
\Sigma\) is symplectomorphic to the symplectisation of \(M\).

\end{itemize}
\begin{Definition}[Link at infinity]\label{dfn:link}
Let \(M\subset W\) be a \((2n-1)\)-dimensional submanifold
transverse to the Liouville flow and disjoint from the skeleton. The
contact manifold \((M, \lambda|_{M})\) is called \emph{link (at
infinity) of \(W\)}, denoted by \(\Link(W)\).

\end{Definition}
The contactomorphism type of \(\Link(W)\) is independent of the choice
of \(M\) by {\cite[Lemma 11.4]{CieliebakEliashberg}}.

\begin{lem}\label{lma:construction}
  Suppose there is a contact form \(\alpha\) on \(\Link(W)\) (not
  necessarily \(\lambda|_{\Link(W)}\)) for which there is a Lagrangian
  torus fibration \(f\colon \Link(W)\to B\). We get a Lagrangian torus
  fibration \(F\colon \overline{W}\to\Cone(B)\) where \(\overline{W}\)
  is the symplectic completion of \(W\).
\end{lem}
\begin{proof}
  The complement of the skeleton \(\overline{W}\setminus\Sigma\) is
  symplectomorphic to the symplectisation of the link:
  \(\left(\RR_t\times\Link(W),d\left(e^t\lambda|_{\Link(W)}\right)\right)\). Since
  \(\alpha=e^f\lambda|_{\Link(W)}\) for some function \(f\), this
  symplectisation is itself symplectomorphic to the symplectisation
  \(\left(\RR_\ttt\times\Link(W),d\left(e^\ttt\alpha\right)\right)\),
  via the symplectomorphism \((\ttt,x)\mapsto(t+f(x),x)\).
  
  The fibration \(F\colon\RR\times\Link(W)\to\RR\times B\),
  \(F(\ttt,x)=(\ttt,f(x))\) defines a Lagrangian torus fibration on
  the complement \(\bar{W}\setminus\Sigma\). This extends continuously
  to a map \(\overline{W}\to\Cone(B)\) by sending \(\Sigma\) to the
  cone point.
\end{proof}

\begin{Remark}
  The discriminant locus of \(F\colon W\to\Cone(B)\) is the cone on
  the discriminant locus of \(f\colon\Link(W)\to B\).
\end{Remark}

\begin{Remark}
  Although the existence of a Lagrangian torus fibration on a contact
  manifold depends on the contact form \(\alpha\) (the fibres must be
  \(d\alpha\)-Lagrangian), Lemma \ref{lma:construction} only depends
  on the contactomorphism (not strict contactomorphism) type of the
  link.
\end{Remark}

\begin{Remark}
  The Lagrangian skeleton of \(W\) is not easy to find in
  general. Ruddat, Sibilla, Treumann and Zaslow \cite{RSTZ} have given
  a conjectural description of the skeleton when \(W\) is a
  hypersurface in \((\CC^*)^m\times\CC^n\), and all the varieties we
  consider in this paper fall into this class. When \(W\) is
  hypersurface in \((\CC^*)^m\), P. Zhou \cite{Zhou} has confirmed
  that there is a Liouville structure whose skeleton is precisely the
  RSTZ skeleton.
\end{Remark}

\begin{Remark}
  In the situation of the SYZ conjecture, where \(W\) is a local model
  for a Calabi-Yau variety, \(\Cone(B)\) should be homeomorphic to a
  codimension zero submanifold of the \(n\)-sphere. One might wonder
  in that case if the map \(F\) is smooth {\em across} the different
  strata for some chosen smooth structure on \(\Cone(B)\). That is not
  a question we will consider in this paper, as it does not appear
  natural from our perspective: in our more general setting,
  \(\Cone(B)\) will not always be a topological manifold.
\end{Remark}

\subsection{Symplectic neighbourhoods}
\label{sct:sympnbhd}

Let \(Y\subset X\) be a smooth codimension 2 symplectic submanifold
with symplectic normal bundle \(\pi\colon\nu\to Y\). Pick a Hermitian
metric and a unitary connection \(\nabla\) on \(\nu\) and let
\(\mu\colon\nu\to\RR\) denote the function which generates the
rotation of fibres. Let \(\mathcal{H}\) denote the horizontal spaces
of \(\nabla\) and define a 2-form \(\Omega\) on \(\nu\) which:
\begin{itemize}
\item equals \(\pi^*\omega_Y+\langle\mu,F_\nabla\rangle\) on pairs of
  horizontal vectors (thinking of \(\mu\) as
  \(\mathfrak{u}(1)^*\)-valued and \(F_\nabla\) as
  \(\mathfrak{u}(1)\)-valued),
\item equals the fibrewise Hermitian volume form on pairs of vertical
  vectors,
\item makes \(\mathcal{H}\) orthogonal to the fibres.
\end{itemize}
This 2-form \(\Omega\) is closed and is nondegenerate on a
neighbourhood of the zero-section; see for instance \cite[Section 2.2]{MTZ}.
\begin{Definition}[{\cite[Definition 2.9]{MTZ}}]
  An \(\omega\)-regularisation of \(Y\) in \(X\) is a
  symplectomorphism
  \[\Psi\colon \left(\nbhd_\nu(Y),\Omega\right) \to
    \left(\nbhd_X(Y),\omega\right)\] from a neighbourhood of the
  zero-section in \(\nu\) to a neighbourhood of \(Y\) in \(X\) such
  that, along \(Y\), \(d\Psi\) induces the canonical isomorphism from
  the vertical distribution of \(\nu\) to the normal bundle of \(Y\).
\end{Definition}

The following lemma is immediate from the definition of \(\Omega\). In
the special case when \(F_\nabla=\omega\), it recovers Biran's circle
bundle construction.
\begin{lem}\label{lma:biran}
  Suppose that \(L\subset Y\) is a Lagrangian submanifold on whose
  tangent spaces the curvature \(F_\nabla\) vanishes. Then, for all
  \(c>0\), the submanifold \(\{x\in \nu\ :\ \pi(x)\in L,\ |x|=c\}\) is
  an \(\Omega\)-Lagrangian circle-bundle over \(L\).
\end{lem}

\begin{Remark}
  In an earlier version of this paper we implicitly assumed that Lemma
  \ref{lma:biran} holds without the curvature hypothesis, and used
  this to lift Lagrangian torus fibrations from \(Y\) to the
  link. This does not work unless one is able to find connections for
  which the curvature vanishes along all Lagrangians in the torus
  fibration and which satisfy the compatibility conditions
  {\cite[Definitions 2.11 and 2.12]{MTZ}} along the normal crossing
  locus. For example if \(Y\) is a complex curve then the curvature
  condition is empty because Lagrangians in \(Y\) are 1-dimensional.
\end{Remark}

All of this generalises to symplectic submanifolds of higher
codimension whose normal bundle splits as a direct sum of Hermitian
line bundles; the Lagrangian submanifolds from Lemma \ref{lma:biran}
are then torus-bundles.

If we have a simple normal crossing divisor
\(Y=\bigcup_{i\in I}Y_i\subset X\) then an \(\omega\)-regularisation
of \(Y\) is a collection of \(\omega\)-regularisations of the
submanifolds \(Y_J=\bigcap_{j\in J}Y_j\) satisfying compatibility
conditions {\cite[Definitions 2.11 and 2.12]{MTZ}}.

By {\cite[Theorem 2.13]{MTZ}}, after an exact deformation of the
symplectic form \(\omega\) we can find an \(\omega\)-regularisation of
\(Y\). By {\cite[Corollary 5.11]{McLeanReeb}}, this deformation does
not change the contactomorphism type of the link; though McLean's
proof of this is written for ``positively wrapped divisors'' (like
exceptional divisors of resolutions of singularities) it carries over
with minor sign changes to the case of negatively wrapped divisors
(like ample divisors).

Moreover, we can identify the link directly. Let \(r_i\) be a radial
coordinate in the normal bundle \(\nu_i\) of \(Y_i\) (in the sense of
{\cite[Definition 5.7]{McLeanReeb}}). McLean calls a function
\(f\colon X\setminus Y\to\RR\) compatible with \(Y\) if it has the
form \(\sum_im_i\log r_i+\tau\) in a punctured neighbourhood of \(Y\),
for some constants \(m_i\) and a smooth function \(\tau\). The level
sets \(f^{-1}(\epsilon)\) of a compatible function are of contact type
for sufficiently small \(\epsilon\) {\cite[Proposition
  5.8]{McLeanReeb}}. The plurisubharmonic function \(\psi\) on
\(X\setminus Y\) is compatible with \(Y\) {\cite[Lemma
  5.25]{McLeanReeb}}, and so is the function
\(\sum_{i\in I}\log(\GG\circ \mu_i)\) where \(\GG\) is a cut-off
function satisfying
\begin{equation}\label{eq:cutoff2}\GG(x)=\begin{cases}x
&\text{ if }x\in[0,\epsilon/3]\\1&\text{ if
}x\geq\epsilon,\end{cases}\end{equation} and which is positive and
strictly increasing on \([0,\epsilon]\).

\begin{center}
\begin{tikzpicture}[baseline=0cm]
\draw (2.3,0) -- (0,0) -- (0,2.3);
\node at (2.3,0) [right] {\(x\)};
\node at (0,2.3) [left] {\(\GG(x)\)};
\node at (2/3,0) [below] {\(\tfrac{\epsilon}{3}\)};
\node at (2,0) [below] {\(\epsilon\)};
\draw (0,0) -- (2/3,2/3) to[out=45,in=180] (2,2) -- (2.3,2);
\draw[dotted] (2/3,0) -- (2/3,2/3);
\draw[dotted] (2,0) -- (2,2);
\end{tikzpicture}
\end{center}

The link of \(W\) (a level set of \(\psi\) near \(Y\)) is therefore
contactomorphic to a plumbing of the circle bundles with fixed radii
\(r_i\) in the normal bundles of the components \(Y_i\) (a level set
of \(\sum_{i\in I}\log(\GG\circ\mu_i)\)).

\section{Examples}
\label{sct:weinstein}

We now apply Lemma \ref{lma:construction} in some examples to
construct Lagrangian torus fibrations on affine varieties.

\subsection{Example: pair-of-pants}
\label{sct:pop}

Let \(Y\) be a union of four lines \(Y_1,\ldots,Y_4\) in general
position in \(\cp{2}\). The complement \(W:=\cp{2}\setminus Y\) is
called the \emph{4-dimensional pair-of-pants}.

\begin{lem}
  There is a Lagrangian torus fibration \(f\colon\Link(W)\to B\),
  where \(B\) is the 1-skeleton of a tetrahedron:
  
\begin{center}
\begin{tikzpicture}[baseline=0em]
\draw[blue] (0,0) -- (-90:2);
\draw[blue] (0,0) -- (30:2);
\draw[blue] (0,0) -- (150:2);
\draw[blue] (-90:2) -- (-30:1) -- (30:2) -- (90:1) -- (150:2) -- ( -150:1) -- (-90:2);
\filldraw[fill=red,draw=red] (0,0) circle [radius=0.5mm];
\filldraw[fill=red,draw=red] (-90:2) circle [radius=0.5mm];
\filldraw[fill=red,draw=red] (30:2) circle [radius=0.5mm];
\filldraw[fill=red,draw=red] (150:2) circle [radius=0.5mm];
\filldraw[fill=black,draw=black] (-90:1) circle [radius=0.5mm];
\filldraw[fill=black,draw=black] (30:1) circle [radius=0.5mm];
\filldraw[fill=black,draw=black] (150:1) circle [radius=0.5mm];
\filldraw[fill=black,draw=black] (90:1) circle [radius=0.5mm];
\filldraw[fill=black,draw=black] (-30:1) circle [radius=0.5mm];
\filldraw[fill=black,draw=black] (-150:1) circle [radius=0.5mm];
\end{tikzpicture}
\end{center}
\end{lem}
\begin{proof}
  Using {\cite[Theorem 2.13]{MTZ}}, deform the symplectic form and
  choose a symplectic tubular neighbourhood of \(Y\), that is a
  collection of symplectic embeddings
  \(\Psi_i\colon\nbhd_{\mathcal{O}(1)}(Y_i)\to\cp{2}\), where we have
  equipped \(\mathcal{O}(1)\) with the symplectic form from Section
  \ref{sct:sympnbhd}. Write \(N_i=\OP{Image}(\Psi_i)\) for the
  neighbourhood of \(Y_i\), \(N_{ij}=N_i\cap N_j\) for the
  overlaps. %,
 % and \(r_i\colon N_i\to [0,\epsilon)\) for the radius function in
 % each neighbourhood.

  We now construct a Lagrangian torus fibration
  \(f\colon\Link(\cp{2}\setminus Y)\to B\) as follows:
  \begin{itemize}
  \item on overlaps \((\mu_i,\mu_j)\colon N_{ij}\to[0,\infty)^2\) is a
    Lagrangian torus fibration. When restricted to
    \(\Link(W)=\{x\ :\ \sum_{i\in I}\log(\GG(\mu_i(x)))=\epsilon'\}\),
    its image is the arc shown below:
    \begin{center}
      \begin{tikzpicture}
        \draw (0,0) -- (2,0);
        \draw (0,0) -- (0,2);
        \draw[blue,thick] (0.3,2) to[out=-90,in=135] (0.7,0.7) to[out=-45,in=180] (2,0.3);
        \node at (0.7,0.7) {\(\bullet\)};
      \end{tikzpicture}
    \end{center}
    (We have drawn the black dot to show how this arc sits inside the
    graph \(B\)). Note that \((\mu_i,\mu_j)\) restricts to function
    \(\varphi_i\colon Y_i\cap N_{ij}\to[0,\infty)\).
  \item on \(N_i^{\circ}:=N_i\setminus\bigcup_{j\neq i}N_{ij}\), the
    projection \(\pi_i\colon \Link(W)\cap N_i^{\circ}\to Y_i\) is a
    circle bundle. Let \(B_i\) be a Y-graph and let
    \(\varphi_i\colon Y_i\to B_i\) be a function extending the
    functions already constructed on \(Y_i\cap N_{ij}\):

    \begin{center}
      \begin{tikzpicture}[baseline=0em]
        \draw (0,0) circle [radius=1cm];
        \filldraw[fill=black] (90:1) circle [radius=0.5mm];
        \filldraw[fill=black] (-30:1) circle [radius=0.5mm];
        \filldraw[fill=black] (-150:1) circle [radius=0.5mm];
        \draw[blue,thick] (180:1) to[out=30,in=45] (240:1);
        \draw[blue,thick] (-60:1) to[out=135,in=150] (0:1);
        \draw[red,thick] (-90:1) to[out=90,in=0] (150:1);
        \draw[red,thick] (-90:1) to[out=90,in=180] (30:1);
        \draw[blue,thick] (60:1) to[out=-90,in=-90] (120:1);
        \draw[blue,dotted,thick] (180:1) to[out=-30,in=120] (240:1);
        \draw[blue,dotted,thick] (-60:1) to[out=90,in=-150] (0:1);
        \draw[red,dotted,thick] (-90:1) to[out=105,in=-45] (150:1);
        \draw[red,dotted,thick] (-90:1) to[out=75,in=-135] (30:1);
        \draw[blue,dotted,thick] (60:1) to[out=190,in=-10] (120:1);
        \draw[->,thick] (1.3,0) -- (2,0);
        \draw[ultra thick,blue,shift={(3,0)}] (0,0) -- (90:1);
        \draw[ultra thick,blue,shift={(3,0)}] (0,0) -- (210:1);
        \draw[ultra thick,blue,shift={(3,0)}] (0,0) -- (-30:1);
        \filldraw[fill=red,draw=red,shift={(3,0)}] (0,0) circle [radius=0.5mm];
        \filldraw[fill=black,draw=black,shift={(3,0)}] (90:1) circle [radius=0.5mm];
        \filldraw[fill=black,draw=black,shift={(3,0)}] (210:1) circle [radius=0.5mm];
        \filldraw[fill=black,draw=black,shift={(3,0)}] (-30:1) circle [radius=0.5mm];
      \end{tikzpicture}
    \end{center}

    The composition
    \(\varphi_i\circ\pi_i\colon\Link(W)\cap N^{\circ}_i\to B_i\) is a
    Lagrangian torus fibration: by Lemma \ref{lma:biran}, the fibres
    are Lagrangian \(S^1\)-bundles over the 1-dimensional fibres of
    \(\varphi_i\) (the vanishing curvature condition is
    trivial). \qedhere
  \end{itemize}
\end{proof}

The cone on the graph \(B\) can be visualised in \(\RR^3\) as the cone
on the 1-skeleton of a tetrahedron. The result of applying
Lemma \ref{lma:construction} in this case is a Lagrangian torus fibration
of \(\cp{2}\setminus Y\) over this cone. The fibres over the cones on
the blue edges are tori; the fibres over the cones on the red vertices
are \(S^1\times 8\) (where \(8\) denotes the wedge of two
circles). The fibre over the cone point is any Lagrangian skeleton for
the pair-of-pants. In particular, since the pair-of-pants is a
hypersurface in \((\CC^*)^3\), Zhou's result \cite{Zhou} implies that
we can take the RSTZ skeleton \cite{RSTZ}. In this particular case,
the RSTZ skeleton is obtained from three disjoint 2-tori by attaching
three cylinders and a triangular 2-cell as indicated in the figure
below. This is homotopy equivalent to the 2-skeleton of a 3-torus.

\begin{center}
\begin{tikzpicture}
\draw[thick,purple,decoration={markings, mark=at position 0.5 with {\arrow{>>}}},postaction={decorate}] (-0.8,-2.2) -- (0,-1.4);
\draw[thick,purple,decoration={markings, mark=at position 0.5 with {\arrow{>>}}},postaction={decorate}] (0,-1.4) -- (0.8,-2.2);
\draw[thick,purple,decoration={markings, mark=at position 0.5 with {\arrow{>}}},postaction={decorate}] (0.8,-2.2) -- (-0.8,-2.2);
\draw[thick] (0,0.3) circle [radius=1cm];
\draw[thick] (-0.3,-0.2) arc [radius=0.5cm,start angle=-90,end angle=90];
\draw[thick] (0.1,0) arc [radius=0.3cm,start angle=-90,end angle=-270];
\draw[thick] (-2,-3) circle [radius=1cm];
\draw[thick] (-2.3,-3.5) arc [radius=0.5cm,start angle=-90,end angle=90];
\draw[thick] (-1.9,-3.3) arc [radius=0.3cm,start angle=-90,end angle=-270];
\draw[thick] (2,-3) circle [radius=1cm];
\draw[thick] (1.7,-3.5) arc [radius=0.5cm,start angle=-90,end angle=90];
\draw[thick] (2.1,-3.3) arc [radius=0.3cm,start angle=-90,end angle=-270];
\draw[thick,purple,decoration={markings, mark=at position 0.5 with {\arrow{>}}},postaction={decorate}] (0.5,-2.5) -- (-0.5,-2.5);
\draw[thick] (-0.5,-3.5) -- (0.5,-3.5);
\draw[thick,red,decoration={markings, mark=at position 0.125 with {\arrow{>}}},postaction={decorate}] (-0.5,-3) circle [x radius=0.3cm,y radius=0.5cm];
\draw[thick,blue,decoration={markings, mark=at position 0.75 with {\arrow{>}}},postaction={decorate}] (0.5,-3.5) arc [x radius=0.3cm,y radius=0.5cm,start angle=-90,end angle=90];
\draw[thick,blue,dotted] (0.5,-3.5) arc [x radius=0.3cm,y radius=0.5cm,start angle=-90,end angle=-270];
\draw[thick,red,decoration={markings, mark=at position 0.25 with {\arrow{>}}},postaction={decorate}] (-1,-3) arc [x radius=0.4cm,y radius=0.2cm,start angle=0,end angle=-180];
\draw[thick,red,dotted] (-1,-3) arc [x radius=0.4cm,y radius=0.2cm,start angle=0,end angle=180];
\draw[thick,blue,decoration={markings, mark=at position 0.125 with {\arrow{>}}},postaction={decorate}] (2,-3) circle [x radius=0.3cm,y radius=0.5cm];
\draw[thick,blue,decoration={markings, mark=at position 0.125 with {\arrow{>>}}},postaction={decorate}] (-2,-3) circle [x radius=0.3cm,y radius=0.5cm];
\draw[thick,blue,decoration={markings, mark=at position 0.125 with {\arrow{>>}}},postaction={decorate}] (0,0.3) circle [x radius=0.3cm,y radius=0.5cm];
\draw[thick,red,decoration={markings, mark=at position 0.75 with {\arrow{>>}}},postaction={decorate}] (2,-2.7) arc [x radius=0.2cm,y radius=0.35cm,start angle=-90,end angle=-270];
\draw[thick,red,dotted] (2,-2.7) arc [x radius=0.2cm,y radius=0.35cm,start angle=-90,end angle=90];
\draw[thick,red,decoration={markings, mark=at position 0.25 with {\arrow{>>}}},postaction={decorate}] (0,-0.7) arc [x radius=0.2cm,y radius=0.35cm,start angle=-90,end angle=-270];
\draw[thick,red,dotted] (0,-0.7) arc [x radius=0.2cm,y radius=0.35cm,start angle=-90,end angle=90];
\begin{scope}[shift={(-3.2,0.8)},rotate=45]
\draw (-0.5,-2.5) -- (0.5,-2.5);
\draw[thick,purple,decoration={markings, mark=at position 0.5 with {\arrow{>>}}},postaction={decorate}] (-0.5,-3.5) -- (0.5,-3.5);
\draw[thick,blue,decoration={markings, mark=at position 0.125 with {\arrow{>>}}},postaction={decorate}] (-0.5,-3) circle [x radius=0.3cm,y radius=0.5cm];
\draw[thick,red,decoration={markings, mark=at position 0.75 with {\arrow{>>}}},postaction={decorate}] (0.5,-3.5) arc [x radius=0.3cm,y radius=0.5cm,start angle=-90,end angle=90];
\draw[thick,red,dotted] (0.5,-3.5) arc [x radius=0.3cm,y radius=0.5cm,start angle=-90,end angle=-270];
\end{scope}
\begin{scope}[shift={(3.2,0.8)},rotate=-45]
\draw (-0.5,-2.5) -- (0.5,-2.5);
\draw[thick,purple,decoration={markings, mark=at position 0.5 with {\arrow{>>}}},postaction={decorate}] (-0.5,-3.5) -- (0.5,-3.5);
\draw[thick,blue,decoration={markings, mark=at position 0.125 with {\arrow{>>}}},postaction={decorate}] (-0.5,-3) circle [x radius=0.3cm,y radius=0.5cm];
\draw[thick,red,decoration={markings, mark=at position 0.75 with {\arrow{>>}}},postaction={decorate}] (0.5,-3.5) arc [x radius=0.3cm,y radius=0.5cm,start angle=-90,end angle=90];
\draw[thick,red,dotted] (0.5,-3.5) arc [x radius=0.3cm,y radius=0.5cm,start angle=-90,end angle=-270];
\end{scope}
\end{tikzpicture}
\end{center}

\begin{Remark}
In this case, Mikhalkin gave a purely topological torus fibration of
the pair-of-pants over the same base, namely the tropicalisation map
\cite{Mikhalkin}. See also the paper of Golla and Martelli
\cite{GollaMartelli} for a description of the Mikhalkin
fibration. The fact that the 4-dimensional pair-of-pants is homotopy
equivalent to the 2-skeleton of a 3-torus was proved much earlier by
Salvetti \cite{Salvetti}.

\end{Remark}
\subsection{Example: negative vertex}
\label{sct:negvtx}

Take the affine variety
\[W=\{(x,y,u_1,u_2)\in\CC^2\times(\CC^*)^2\ :\ xy=u_1+u_2+1\}.\] We
call this variety {\em the negative vertex}. It can be compactified to
\(\cp{3}\) as follows. Rewrite the equation for \(W\) as
\(u_2=xy-u_1-1\); \(W\) is the complement of \(u_2=0\) in
\(\CC_{xy}^2\times\CC_{u_1}^*\), in other words it is the complement
in \(\cp{3}_{[x:y:u_1:w]}\) of \((xy-u_1w-w^2)u_1w=0\). Taking
\(L_1=\mathcal{O}(2)\), \(L_2=L_3=\mathcal{O}(1)\),
\(s_1=xy-u_1w-w^2\), \(s_2=u_1\), \(s_3=w\), we obtain a subvariety
\(Y=Y_1\cup Y_2\cup Y_3\) such that \(W=\cp{3}\setminus Y\). However,
this subvariety is not normal crossing: the components \(Y_1\) and
\(Y_3\) intersect non-transversely at \([0:0:1:0]\) (\(Y_3\) is the
tangent plane to the quadric \(Y_1\) at that point).

If we blow up along the line \([x:0:u_1:0]\) then the total transform
of \(Y\) becomes simple normal crossing. We continue to write \(Y_j\)
for the proper transform of \(Y_j\), \(j=1,2,3\), and we write \(Y_4\)
for the exceptional divisor (a copy of \(\cp{1}\times\cp{1}\)). We
will consider the ample divisor \(Y_1+Y_2+Y_3+Y_4\) (i.e. giving all
components multiplicity one). The symplectic form coming from this
ample divisor is in the cohomology class \(4H-E\), where \(H\) is the
cohomology class Poincar\'{e} dual to the proper transform of a
generic plane and \(E\) is the class of the exceptional divisor.

Without choosing very specific curvature forms on the normal bundles,
we no longer have access to Lemma \ref{lma:biran} for constructing
Lagrangian torus fibrations on the link. Instead, we will proceed in a
more {\em ad hoc} manner and construct the torus fibration directly
using almost toric methods.

In Figure \ref{fig:negvert}, we draw almost toric base diagrams for a
system of Lagrangian torus fibrations \(\varphi_j\colon Y_j\to
B_j\). Recall that crosses indicate focus-focus singularities and
dotted lines indicate branch cuts. Some of the edges are broken by a
branch cut: they are nonetheless straight lines in the affine
structure and the break point is not to be considered a vertex. The
colours on edges indicate how these edges are to be identified in the
pushout. Broken edges are decorated with the same colour on each
segment, which is taken to mean that the edge continues beyond the
break point, not that some kind of self-identification should be
made. Dots on the edges are there to indicate interior integral points
of the edges (for the integral affine structure). You can read the
affine lengths of edges off from the integrals of \([\omega]=4H-E\)
over the corresponding spheres.

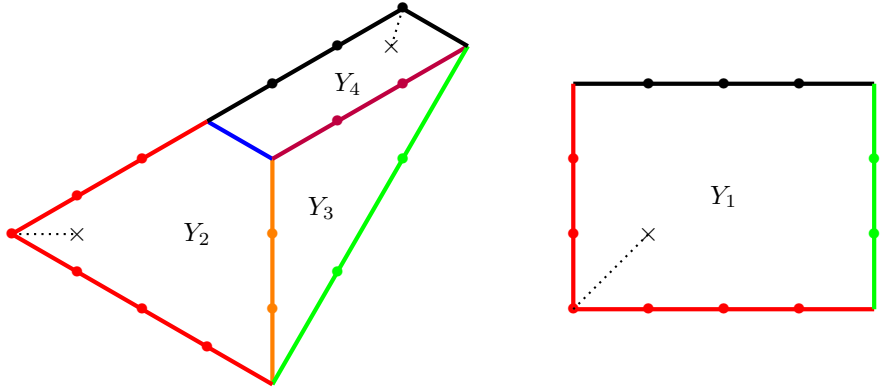
\begin{figure}
\begin{center}
\begin{tikzpicture}
\draw[orange,ultra thick] (0,0) -- (-90:3);
\node[orange] at (-90:1) {\(\bullet\)};
\node[orange] at (-90:2) {\(\bullet\)};
\draw[blue,ultra thick] (0,0) -- (150:1);
\draw[purple,ultra thick] (0,0) -- (30:3);
\node[purple] at (30:1) {\(\bullet\)};
\node[purple] at (30:2) {\(\bullet\)};
\draw[green,ultra thick] (30:3) -- (-90:3);
\node[green,shift={(0,-3)}] at ({sqrt(3)/2},3/2) {\(\bullet\)};
\node[green,shift={(0,-3)}] at ({2*sqrt(3)/2},3) {\(\bullet\)};
\draw[black,ultra thick] (30:3) -- ({sqrt(3)},2) -- (150:1);
\node[black] at (0,1) {\(\bullet\)};
\node[black] at ({sqrt(3)/2},1.5) {\(\bullet\)};
\node[black] at ({sqrt(3)},2) {\(\bullet\)};
\draw[dotted,thick] ({sqrt(3)},2) -- ({sqrt(3)-0.15},2-0.5) node {\(\times\)};
\draw[red,ultra thick,shift={(0,-3)}] (0,0) -- (150:4);
\draw[red,ultra thick] (150:1) -- ({-2*sqrt(3)},-1);
\draw[dotted,thick] ({-2*sqrt(3)},-1) -- ({-1.5*sqrt(3)},-1) node {\(\times\)};
\node[red,shift={(0,-3)}] at (150:1) {\(\bullet\)};
\node[red,shift={(0,-3)}] at (150:2) {\(\bullet\)};
\node[red,shift={(0,-3)}] at (150:3) {\(\bullet\)};
\node[red,shift={(0,-3)}] at (150:4) {\(\bullet\)};
\node[red] at ({-1.5*sqrt(3)},-0.5) {\(\bullet\)};
\node[red] at (-{sqrt(3)},0) {\(\bullet\)};
\draw[black,ultra thick,shift={(6,0)}] (2,1) -- (-2,1);
\node[black,shift={(6,0)}] at (1,1) {\(\bullet\)};
\node[black,shift={(6,0)}] at (0,1) {\(\bullet\)};
\node[black,shift={(6,0)}] at (-1,1) {\(\bullet\)};
\draw[red,ultra thick,shift={(6,0)}] (-2,1) -- (-2,-2) -- (2,-2);
\node[red,shift={(6,0)}] at (-2,0) {\(\bullet\)};
\node[red,shift={(6,0)}] at (-2,-1) {\(\bullet\)};
\node[red,shift={(6,0)}] at (-2,-2) {\(\bullet\)};
\node[red,shift={(6,0)}] at (0,-2) {\(\bullet\)};
\node[red,shift={(6,0)}] at (-1,-2) {\(\bullet\)};
\node[red,shift={(6,0)}] at (1,-2) {\(\bullet\)};
\draw[green,ultra thick,shift={(6,0)}] (2,-2) -- (2,1);
\node[green,shift={(6,0)}] at (2,0) {\(\bullet\)};
\node[green,shift={(6,0)}] at (2,-1) {\(\bullet\)};
\draw[dotted,thick,black,shift={(6,0)}] (-2,-2) -- (-1,-1) node {\(\times\)};
\node at (-1,-1) {\(Y_2\)};
\node at (1,1) {\(Y_4\)};
\node at (0.65,-0.65) {\(Y_3\)};
\node at (6,-0.5) {\(Y_1\)};
\end{tikzpicture}
\end{center}
\caption{Almost toric base diagrams for a system of
  Lagrangian torus fibrations on the boundary divisor for the negative
  vertex.}
\label{fig:negvert}
\end{figure}

\begin{Proposition}\label{prp:neg1}
  We can use this system of fibrations on \(Y\) to construct a
  Lagrangian torus fibration on the link. The base \(B\) is a
  topological 2-sphere and the discriminant locus consists of three
  points (the focus-focus singularities of the almost toric
  fibrations).  We further obtain a Lagrangian torus fibration on
  \(W\); the base is a 3-ball and the discriminant locus is the cone
  over three points, i.e. a Y-graph.
\end{Proposition}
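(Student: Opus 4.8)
The plan is to construct the Lagrangian torus fibration $f\colon\Link(W)\to B$ by hand, piece by piece over the components $Y_i$, and then feed it into \cref{lma:construction}. By the discussion at the end of \cref{sct:sympnbhd}, $\Link(W)$ is contactomorphic to the plumbing of the fixed-radius circle bundles $\partial\nu_i$ in the normal bundles of the $Y_i$; I would decompose it into the pieces $P_i$ lying over $Y_i^\circ:=Y_i\setminus\bigcup_{j\neq i}\nbhd(Y_i\cap Y_j)$ and the ``plumbing necks'' $P_J$ lying over neighbourhoods of the strata $Y_J=\bigcap_{j\in J}Y_j$ of the crossing locus ($|J|\geq2$). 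Over $P_i$ we have the circle bundle $\pi_i\colon P_i\to Y_i^\circ$ together with the almost toric fibration $\varphi_i\colon Y_i\to B_i$ of \cref{fig:negvert}; over $P_J$ we also see the remaining normal circle directions and the moment circles of $Y_J$. The base $B$ is to be the pushout $\bigcup_i B_i$ of the base diagrams, glued along the edges corresponding to the curves $Y_i\cap Y_j$ according to the colour matching in \cref{fig:negvert}.

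First I would assemble and identify $B$. Each $B_i$ is topologically a disc (a polygon, possibly modified by nodal trades), and gluing them along the colour-matched boundary edges gives a closed surface; counting the vertices, edges and $2$-cells contributed by \cref{fig:negvert} gives Euler characteristic $2$, and since the result is closed and orientable it is a topological $2$-sphere. The only singular points of the $\varphi_i$ are the three focus--focus points marked by crosses in \cref{fig:negvert}, and they survive as isolated interior points of $B$; these will be $\operatorname{Discr}(f)$.

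Next I would build $f$ over each piece and check that the descriptions agree on overlaps. Over a neck $P_{ij}$ the curve $Y_{ij}\cong\cp{1}$ carries the moment map $\varphi_{ij}\colon Y_{ij}\to I$ which is the common restriction of $\varphi_i$ and $\varphi_j$ to the relevant edge, and I set $f=(\mu_i,\varphi_{ij})$, with $\mu_j$ determined by the defining relation $\sum_k\log(\GG\circ\mu_k)=\epsilon'$ of the link; the fibres are the $3$-tori $S^1_{\theta_i}\times S^1_{\theta_j}\times(\text{moment circle of }Y_{ij})$, and these are Lagrangian because the moment circles are isotropic in $Y_{ij}$ and the normal curvature forms, being $2$-forms, restrict to zero on $1$-dimensional submanifolds. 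This last observation is the reason there is no obstruction from \cref{lma:biran} along the crossing locus, and it leaves us free to choose the $\nabla_i$ near $Y_{ij}$ so as to meet the compatibility conditions of an $\omega$-regularisation. Over $P_i$ I want $f=\varphi_i\circ\pi_i$; here \cref{lma:biran} now demands that $F_{\nabla_i}$ vanish on the $2$-dimensional Lagrangian torus fibres of $\varphi_i$, which is the point flagged in \cref{sct:sympnbhd}. Since such a fibre is nullhomologous in $Y_i$ (it bounds the preimage of a path to $\partial B_i$) and $c_1(\nu_i)$ restricts to zero on the complement $Y_i^\circ$ of the crossing curves, we may take $\nabla_i$ to be flat over the part of $Y_i^\circ$ on which $\varphi_i$ has $2$-dimensional fibres, concentrating the curvature of $\nu_i$ near the crossing curves (where, as just noted, the obstruction is vacuous). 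Then \cref{lma:biran} applies to every fibre over $Y_i^\circ$, so $\varphi_i\circ\pi_i$ is a Lagrangian torus fibration of $P_i$ whose singular fibres, over the focus--focus points, are a nodal torus times a circle. Finally one verifies that over $P_i\cap P_{ij}$ the two descriptions agree and that the edge identifications defining $B$ are integral-affine isomorphisms -- this is where the interior integral points marked on the edges in \cref{fig:negvert}, and the affine lengths read off from $[\omega]=4H-E$, are used. This yields $f\colon\Link(W)\to B$ with $B\cong S^2$ and $\operatorname{Discr}(f)$ three points.

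Applying \cref{lma:construction} to $f$ gives a Lagrangian torus fibration $F\colon\overline W\to\Cone(B)$; since $B\cong S^2$, $\Cone(B)$ is a $3$-ball, and by the remark following \cref{lma:construction} we have $\operatorname{Discr}(F)=\Cone(\operatorname{Discr}(f))=\Cone(\{\text{three points}\})$, which is a Y-graph. The main obstacle is the middle step: checking that the \emph{ad hoc} almost-toric pieces over the $P_i$ and $P_J$ really do assemble into one global smooth Lagrangian torus fibration -- i.e. that the combinatorics and integral-affine data of \cref{fig:negvert} are consistent with the plumbing dictated by the normal bundles of the $Y_i$, including the behaviour at the triple points of the crossing locus, and with the curvature/compatibility bookkeeping forced by \cref{lma:biran} and the $\omega$-regularisation near the crossing curves -- and that no unexpected singular fibres are created along that locus.
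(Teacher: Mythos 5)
Your overall architecture (fibre the link piece by piece over the \(Y_i\) and the plumbing necks, glue the bases \(B_i\) into a \(2\)-sphere, then apply \cref{lma:construction}) is the route the paper deliberately avoids, and the step you flag yourself as ``the main obstacle'' is a genuine gap, not a routine verification. The difficulty is exactly the one recorded in the remark following \cref{lma:biran}: to use \(\varphi_i\circ\pi_i\) over \(Y_i^\circ\) you need \(F_{\nabla_i}\) to vanish on \emph{every} \(2\)-torus fibre of \(\varphi_i\) that you lift, while \(\nabla_i\) must also satisfy the \(\omega\)-regularisation compatibility conditions along the crossing locus. Your fix --- make \(\nabla_i\) flat where the fibres are \(2\)-dimensional and push the curvature into a collar around the crossing curves, ``where the obstruction is vacuous'' --- does not work as stated. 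The total curvature is forced to be nonzero (e.g.\ \(c_1(\nu \mathcal{Y}_1)=[\mathcal{Y}_{13}]+[\mathcal{Y}_{14}]\)), so it must sit somewhere; but over the collar the link fibres you describe in the neck picture, namely \(S^1_{\theta_i}\times S^1_{\theta_j}\times(\text{moment circle of }Y_{ij})\) at positive radius \(r_j\), project to \(Y_i\) not onto circles in \(Y_{ij}\) but onto \(2\)-dimensional torus fibres of \(\varphi_i\) lying in the collar. The ``curvature restricts to zero on \(1\)-dimensional submanifolds'' argument applies only literally on \(Y_{ij}\); on the collar tori the concentrated curvature has no reason to vanish, so the Biran-type lift is not Lagrangian there for the form built from the \(Y_i\)-regularisation. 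To rescue the construction you must instead show that near \(Y_{ij}\) the regularised form is the invariant form of the toric model \(\mathcal{O}(m)\oplus\mathcal{O}(n)\to\cp{1}\), that the neck fibration is its toric fibration, and that this matches \(\varphi_i\circ\pi_i\) on the overlap --- precisely the MTZ-compatibility bookkeeping you defer, and also the point where the three focus-focus fibres (nodal torus times circle, where \cref{lma:biran} does not literally apply) need separate treatment.

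The paper's proof is organised specifically to sidestep these choices. It writes down a single \(3\)-dimensional almost toric base diagram for a \(6\)-manifold \(\mathcal{X}\) whose almost toric boundary reproduces the system of \cref{fig:negvert} together with the correct normal bundle data, and then invokes \cref{lma:app1} and \cref{rmk:plumbingapp}: a neighbourhood of \(Y\) in \(X\) is a plumbing of the normal bundles, any two plumbings differ by gauge transformations preserving the subbundle stratification, and unitary such gauge transformations of \(\mathcal{O}(m)\oplus\mathcal{O}(n)\to\cp{1}\) preserve the toric fibration of the plumbing regions. Hence \emph{every} plumbing, in particular the actual neighbourhood of \(Y\), carries an almost toric fibration with this base diagram, and the link fibration is obtained by restricting to the preimage of a hypersurface in the base; the three focus-focus rays meet that hypersurface in the three discriminant points, and the passage to \(W\) via \cref{lma:construction} is then as you say. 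So the final paragraph of your proposal is fine, but the middle of your argument either needs the toric-model matching carried out in detail or should be replaced by a global argument of the plumbing-uniqueness type.
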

\begin{proof}
  We will prove this in Section \ref{sct:prf_neg1} below.
\end{proof}

\begin{thm}\label{thm:negvtx}
The negative vertex admits a Lagrangian torus fibration over the
3-ball such that the discriminant locus is a Y-graph. The fibre over
the vertex of the Y-graph is a topological space obtained by
attaching a solid torus to a wedge of two circles via an attaching
map which is freely homotopic to a map \(\phi\colon
\partial(D^2\times S^1)\to S^1\vee S^1\) which induces the map
\(\phi_*\colon\ZZ^2\to\ZZ\star\ZZ=\langle a,b\rangle\),
\(\phi_*(1,0)=aba^{-1}b^{-1}\), \(\phi_*(0,1)=1\) on fundamental
groups.
\end{thm}
\begin{proof}
  The existence of the Lagrangian torus fibration follows immediately
  from Lemma \ref{lma:construction} and Proposition \ref{prp:neg1}. It
  remains only to find the Lagrangian skeleton of \(W\), which we do
  in Section \ref{prf:thm:negvtx} below.
\end{proof}
\begin{Remark}\label{rmk:negvtx}
Note that since \(S^1\vee S^1\) is an Eilenberg-MacLane space, the
induced map on fundamental groups determines the attaching map up to
homotopy.

\end{Remark}
\begin{Remark}\label{rmk:grossfibre}
  Note that the Gross fibre (cf \cite[Example 2.6.(4)]{Gross}) %(the \(n=\ell=3\) ``conjectural dual fibre'' of {\cite[Remark 1.4]{GrossTop}}) 
  is also obtained by
  attaching a solid torus to a wedge of two circles by an attaching
  map in this homotopy class. It seems harder to see the relationship
  between the RSTZ skeleton in this case and the Gross fibre, though
  they are necessarily homotopy equivalent.
\end{Remark}

\subsubsection{Proof of Proposition \ref{prp:neg1}}
\label{sct:prf_neg1}

We use {\cite[Theorem 2.13]{MTZ}} to make an exact deformation of the
symplectic structure so that \(Y\) admits an
\(\omega\)-regularisation. Now by Lemma \ref{lma:app1} and Remark
\ref{rmk:plumbingapp}, a neighbourhood of \(Y\) in \(X\) can be
obtained from the normal bundles \(\nu_XY_i\) by plumbing along the
submanifolds \(\nu_XY_I\) (\(|I|\geq 2\)), and any two plumbing
neighbourhoods are related by gauge transformations of \(\nu_XY_I\)
preserving the stratification by subbundles \(\nu_{Y_J}Y_I\). Isotopic
gauge transformations yield symplectomorphic plumbings, so we can work
with unitary gauge transformations without loss of generality (because
\(Sp(2n)\) retracts onto its maximal compact \(U(n)\)).

Momentarily, we will write down an almost toric base diagram for a
6-manifold \(\mathcal{X}\) whose almost toric boundary \(\mathcal{Y}\)
is precisely the system of almost toric 4-manifolds shown in Figure
\ref{fig:negvert}, which is our divisor \(Y\). The normal bundles to
the components and strata of \(\mathcal{Y}\) agree with those of
\(Y\).

The submanifolds \(\nu_{\mathcal{X}}\mathcal{Y}_I\) for \(|I|\geq 2\)
are toric: they are total spaces of
\(\mathcal{O}(m)\oplus\mathcal{O}(n)\to\cp{1}\) for some \(m,n\). If
\(g\) is a unitary gauge transformation of this bundle preserving the
stratification by subbundles \(\mathcal{O}(m)\) and
\(\mathcal{O}(n)\), then it preserves the toric fibration: each toric
fibre is invariant under the group of rotations \(U(1)\times U(1)\) of
the bundle fibre. Therefore all possible plumbings admit almost toric
structures with the same base diagram\footnote{Recall that an almost
  toric base diagram determines the almost toric manifold only once
  certain characteristic classes are fixed \cite{Zung}.}. In
particular, this implies that \(X\) (which corresponds to some choice
of plumbing) admits an almost toric fibration. Restricting this to the
link of \(Y\) gives the desired Lagrangian torus fibration. For
example, you can take the link to be the preimage of a hypersurface in
the almost toric base diagram.

We now describe the almost toric base diagram for \(\mathcal{X}\).
Let
\begin{align*}
  P &= (0,0,0), & Q &= (0,0,3) \\
  B_{134} &= (1,0,0), & B_{123} &= (4,0,3) \\
  B_{124} &= (0,3,3), & B_{234} &= (1,3,3),
\end{align*}
and consider the convex hull of these six points.

\begin{center}
  \includegraphics[width=350pt]{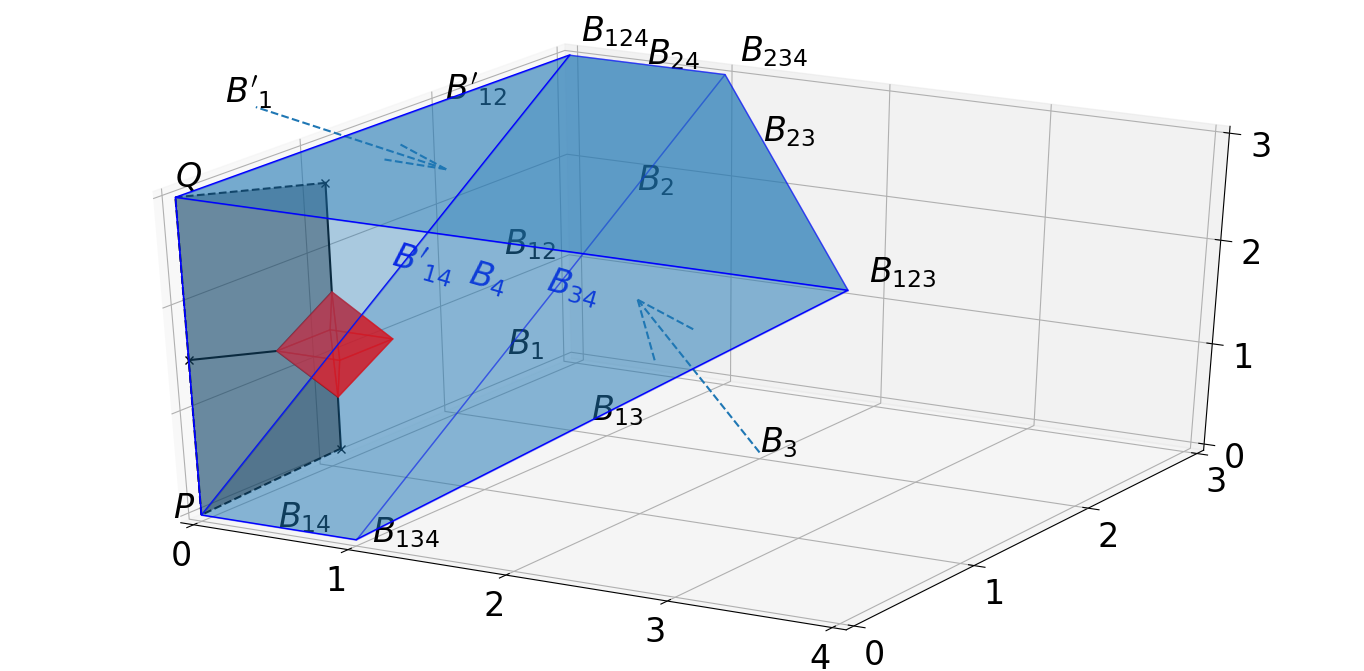}
\end{center}

We name the facets:
\begin{itemize}
\item \(B_1 = \langle P, B_{134}, B_{123}, Q\rangle\)
  (the quadrilateral at the front in the figure),
\item \(B'_1 = \langle P, Q, B_{124}\rangle\) (the triangle at the
  back on the left in the figure),
\item \(B_2 = \langle B_{124}, Q, B_{123}, B_{234}\rangle\) (the
  quadrilateral on the top in the figure),
\item \(B_3 = \langle B_{134}, B_{123}, B_{234}\rangle\) (the slanted
  triangle on the bottom right in the figure),
\item \(B_4 = \langle B_{124}, B_{234}, B_{134}, P\rangle\) (the
  slanted rectangle at the back underneath in the figure).
\end{itemize}
We label the edges \(B_{ij}:=B_i\cap B_j\) and
\(B'_{ij}:=B'_i\cap B_j\).

We will excise a small neighbourhood of \((0.5, 0.5, 1.5)\) (red in
the figure) and call the resulting polytope \(\mathbf{B}\). We will
decorate \(\mathbf{B}\) with the data of a 3-dimensional almost toric
base diagram:
\begin{itemize}
\item We make a branch cut using a half-plane emanating from the vertical
  line at \(x=y=0.5\) (the vertical black line in the figure) and
  containing the direction \((-1,-1,0)\). We call this the {\em branch
  plane}.
\item We insert three rays of focus-focus singularities, emanating:
  \begin{itemize}
  \item vertically downwards from \((0.5,0.5,3)\),
  \item vertically upwards from \((0.5,0.5,0.5)\),
  \item horizontally in the \((1,1,0)\)-direction from \((0,0,1.5)\).
  \end{itemize}
  These three rays disappear into the excised neighbourhood. The
  horizontal ray cuts the branch plane into an upper and lower region.
\item We reglue the affine structures across the branch plane as follows:
  \begin{itemize}
  \item When crossing from \(y<x\) to \(y>x\) across the upper region
    of the branch plane, we use the matrix
    \(M_1=\begin{pmatrix} 2 & -1 & 0 \\ 1 & 0 & 0 \\ 0 & 0 &
      1\end{pmatrix}\).
  \item When crossing from \(y<x\) to \(y>x\) across the lower region
    of the branch plane, we use the matrix
    \(M_2=\begin{pmatrix} 2 & -1 & 0 \\ 1 & 0 & 0 \\ 1 & -1 &
      1\end{pmatrix}\).
  \end{itemize}
\end{itemize}
\begin{Definition}
  Let \(\mathcal{X}\) be an almost toric 6-manifold associated to this
  almost toric base diagram, and \(\mathcal{Y}_i\) the almost toric
  4-manifold associated to the facet \(B_i\). Note that the base for
  each \(\mathcal{Y}_i\) is contractible, hence determines
  \(\mathcal{Y}_i\) completely. On the other hand, the base for
  \(\mathcal{X}\) has nontrivial second cohomology, so can arise as
  the base diagram for different almost toric 6-manifolds
  (distinguished by Zung's Chern class \cite{Zung}).
\end{Definition}
\begin{Remark}
  The union \(\mathcal{Y}=\bigcup_{i=1}^4\mathcal{Y}_i\) is a
  symplectic simple normal crossing divisor with pairwise
  symplectically orthogonal intersections.
\end{Remark}
With this integral affine structure, the facets have the following
properties:
\begin{itemize}
\item \(B_2\) is the almost toric base diagram for an almost toric
  structure on the first Hirzebruch surface (obtained from the
  standard moment quadrilateral by a nodal trade). In particular, the
  edges \(B'_{12}\) and \(B_{12}\) form part of a straight line in the
  reglued integral affine structure (this is because the tangent
  vectors to these edges are related by \(M_1\begin{pmatrix} 0 \\ -1
    \\ 0\end{pmatrix}=\begin{pmatrix} 1 \\ 0 \\ 0\end{pmatrix}\)).
\item \(B_3\) is the standard toric moment polytope for \(\cp{2}\).
\item \(B_4\) is the almost toric base diagram for an almost toric
  structure on \(\cp{1}\times\cp{1}\) obtained from a standard moment
  rectangle by a nodal trade. In particular, the edges \(B'_{14}\) and
  \(B_{14}\) form part of a straight line in the reglued integral
  affine structure (this is because the tangent vectors to these edges
  are related by \(M_2\begin{pmatrix} 0 \\ -1
  \\ -1\end{pmatrix}=\begin{pmatrix} 1 \\ 0 \\ 0\end{pmatrix}\)).
\item \(B_1\cup B'_1\) forms a single facet in the reglued integral affine
  structure. This corresponds to an almost toric structure on
  \(\cp{1}\times\cp{1}\) with one nodal fibre, shown in the diagram
  below:

  \begin{center}
    \begin{tikzpicture}
      \filldraw[thick,fill=blue,opacity=0.4] (0,0) -- (1,0) -- (4,3) -- (-3,3) -- cycle;
      \draw[thick,dashed] (0,0) -- (0,1) node {\(\times\)};
      \node at (0,0) {\(\bullet\)};
      \node at (1,0) {\(\bullet\)};
      \node at (2,1) {\(\bullet\)};
      \node at (3,2) {\(\bullet\)};
      \node at (4,3) {\(\bullet\)};
      \node at (3,3) {\(\bullet\)};
      \node at (2,3) {\(\bullet\)};
      \node at (1,3) {\(\bullet\)};
      \node at (0,3) {\(\bullet\)};
      \node at (-1,3) {\(\bullet\)};
      \node at (-2,3) {\(\bullet\)};
      \node at (-3,3) {\(\bullet\)};
      \node at (-2,2) {\(\bullet\)};
      \node at (-1,1) {\(\bullet\)};
      \node at (3,1.5) {\(B_{13}\)};
      \node at (-2,1.5) {\(B'_{14}\)};
      \node at (0.5,-0.5) {\(B_{14}\)};
      \node at (0.5,3.5) {\(B_{12}\cup B'_{12}\)};
    \end{tikzpicture}
  \end{center}
\end{itemize}  
By a mutation and a shear, this is equivalent to:

\begin{center}
  \begin{tikzpicture}
    \filldraw[thick,fill=blue,opacity=0.4] (0,0) -- (4,0) -- (4,3) -- (0,3) -- cycle;
    \draw[dashed] (0,3) -- (2,1) node {\(\times\)};
    \node at (0,0) {\(\bullet\)};
    \node at (1,0) {\(\bullet\)};
    \node at (2,0) {\(\bullet\)};
    \node at (3,0) {\(\bullet\)};
    \node at (4,0) {\(\bullet\)};
    \node at (4,1) {\(\bullet\)};
    \node at (4,2) {\(\bullet\)};
    \node at (4,3) {\(\bullet\)};
    \node at (3,3) {\(\bullet\)};
    \node at (2,3) {\(\bullet\)};
    \node at (1,3) {\(\bullet\)};
    \node at (0,3) {\(\bullet\)};
    \node at (0,2) {\(\bullet\)};
    \node at (0,1) {\(\bullet\)};
    \node at (4,1.5) [right] {\(B_{13}\)};
    \node at (2,0) [below] {\(B'_{14}\cup B_{14}\)};
    \node at (2,3) [above] {\(B_{12}\)};
    \node at (0,1.5) [left] {\(B'_{12}\)};
  \end{tikzpicture}
\end{center}

We now treat each \(B'_{ij}\cup B_{ij}\) as a single edge and write it
as \(B_{ij}\). We write \(\mathcal{Y}_{ij}\) for the corresponding
symplectic 2-manifold in \(\mathcal{X}\).
\begin{itemize}
\item \(\mathcal{Y}_{13}\) is a copy of
  \(\{pt\}\times\cp{1}\subset \mathcal{Y}_1\cong\cp{1}\times\cp{1}\)
  and a line in \(\mathcal{Y}_3\cong \cp{2}\).
\item \(\mathcal{Y}_{14}\) is a copy of
  \(\cp{1}\times\{pt\}\subset \mathcal{Y}_1\) and a \((1,1)\)-curve in
  \(\mathcal{Y}_4\cong\cp{1}\times\cp{1}\).
\item \(\mathcal{Y}_{34}\) is a line in \(\mathcal{Y}_3\) and a copy
  of \(\{pt\}\times \cp{1}\) in \(\mathcal{Y}_4\).
\item \(\mathcal{Y}_{24}\) is a copy of \(\cp{1}\times\{pt\}\) in
  \(\mathcal{Y}_4\) and the \(-1\)-curve in
  \(\mathcal{Y}_2\cong\mathbb{F}_1\).
\item \(\mathcal{Y}_{23}\) is a line in \(\mathcal{Y}_3\) and a fibre
  of \(\mathbb{F}_1\to\cp{1}\) in \(\mathcal{Y}_1\).
\item \(\mathcal{Y}_{12}\) is a \((1,1)\)-curve in \(\mathcal{Y}_1\)
  and a \((1,1)\) curve in \(\mathcal{Y}_2\) (with respect to the
  basis given by a fibre and the section with square \(1\)).
\end{itemize}
Note that from this you can read off the normal bundles of the
components \(\mathcal{Y}_i\); for example, \(\nu\mathcal{Y}_3\) is
determined by its restriction to
\(\nu_{\mathcal{Y}_1}\mathcal{Y}_{13}\) along \(\mathcal{Y}_{13}\),
which is trivial because \(\{pt\}\times\cp{1}\) has trivial normal
bundle in \(\cp{1}\times\cp{1}\). The Poincar\'{e} duals of the normal
bundles are:
\begin{align*}
c_1(\nu \mathcal{Y}_1) &= [\mathcal{Y}_{13}] + [\mathcal{Y}_{14}], & c_1(\nu \mathcal{Y}_2) &= [\mathcal{Y}_{24}] + [\mathcal{Y}_{23}]\\
c_1(\nu \mathcal{Y}_3) &= 0 & c_1(\nu \mathcal{Y}_4) &= [\mathcal{Y}_{24}] - [\mathcal{Y}_{34}].
\end{align*}

This completes the construction of the base diagram for
\(\mathcal{X}\) and the identification of its almost toric boundary
\(\mathcal{Y}\) with \(Y\).

\subsubsection{Proof of Theorem \ref{thm:negvtx}}
\label{prf:thm:negvtx}

It remains to identify the Lagrangian skeleton of the negative
vertex. The theorem then follows from Lemma \ref{lma:negvtx1} and
Lemma \ref{lma:negvtx2} below:
\begin{itemize}
\item Lemma \ref{lma:negvtx1} identifies the critical locus of a
  suitably-chosen plurisubharmonic function \(\psi\). It shows that
  there are three isolated critical points inside the locus \(xy=0\)
  (whose downward manifolds trace out a copy of \(S^1\vee S^1\)) and a
  circle of critical points with \(xy\neq 0\) (whose downward manifold
  is a solid torus).
\item Lemma \ref{lma:negvtx2} identifies the attaching map for the
  solid torus to \(S^1\vee S^1\).
\end{itemize}

\begin{lem}\label{lma:negvtx1}
If \((x,y,u_1,u_2)\in W\), we will write \(x=Xe^{i\alpha}\),
\(y=Ye^{i\beta}\), \(u_j=e^{R_j+i\theta_j}\). Let
\(\Phi=\alpha+\beta\). Fix a real constant \(c\in(-1,-\ln 2)\) and
consider the plurisubharmonic function
\[\psi=\frac{1}{2}\left(X^2+Y^2+\sum (R_j-c)^2\right).\] The
critical locus of \(\psi|_W\) comprises:
\begin{itemize}
\item the points \[P_1=\left(0,0,-\frac{1}{2},-\frac{1}{2}\right),\quad
P_2=(0,0,e^{a(c)},-e^{b(c)}),\quad
P_3=(0,0,-e^{b(c)},e^{a(c)}),\] where \((a(c),b(c))\) is the
unique point of intersection between the curves
\(e^{R_1}+1=e^{R_2}\) and
\((R_1-c)e^{-(R_1-c)}=-(R_2-c)e^{-(R_2-c)}\);
\item the circle of points
\[\left(e^{i\alpha}\sqrt{2e^{R(c)}+1},e^{-i\alpha}\sqrt{2e^{R(c)}+1},-e^{R(c)},-e^{R(c)}\right),\]
where \(R(c)=c-\mathcal{W}(e^c)\), and \(\mathcal{W}\) is the {\em Lambert
\(W\)-function}\footnote{The Lambert \(W\)-function is the inverse
to the function \(x\mapsto xe^x\) for \(x>0\).}.
\end{itemize}
The downward manifolds of the critical points \(P_2,P_3\) trace out
a figure 8 with vertex at \(P_1\). The downward manifold of the
circle of critical points is a solid torus.
\end{lem}
\begin{proof}
We need to find the critical points of the constrained functional
\begin{gather*}
\frac{1}{2}\left(X^2+Y^2+\sum(R_j-c)^2\right)\\
\hspace{2cm}-\lambda\left(XY\cos\Phi-1-\sum e^{R_j}\cos\theta_j\right)\\
\hspace{4cm}-\mu\left(XY\sin\Phi-\sum
e^{R_j}\sin\theta_j\right).
\end{gather*}
where \(\lambda\) and \(\mu\) are Lagrange multipliers imposing the
constraint \(xy=1+\sum u_j\). Differentiating, the critical point
equations are
\begin{align}
\label{eq:cp1}X&=Y(\lambda \cos\Phi-\mu\sin\Phi)\\
\label{eq:cp2}Y&=X(\lambda \cos\Phi-\mu\sin\Phi)\\
\label{eq:cp3}0&=XY(\lambda\sin\Phi-\mu\cos\Phi)\\
\label{eq:cp4}R_j-c&=-e^{R_j}(\lambda\cos\theta_j+\mu\sin\theta_j)\\
\label{eq:cp5}\lambda\sin\theta_j&=\mu\cos\theta_j.
\end{align}
We identify two cases: \(XY=0\) and \(XY\neq 0\).

{\bf Case \(XY\neq 0\).} If \(X\) and \(Y\) are nonzero then Equation
\eqref{eq:cp3} implies \((\lambda,\mu)=k(\cos\Phi,\sin\Phi)\) for some
\(k\neq 0\). Equation \eqref{eq:cp5} implies that, for all \(j\),
\((\lambda,\mu)=k_j(\cos\theta_j,\sin\theta_j)\) for some \(k_j\).
Overall, this means that the angles \(\theta_1,\ldots,\theta_n\) and
\(\Phi\) agree modulo \(\pi\) and that
\(k_je^{i\theta_j}=ke^{i\Phi}\), so \(k/k_j=\pm 1\). The constraint
equation becomes \(\left(XY-\sum (ke^{R_i}/k_j)\right)e^{i\Phi}=1\),
which implies that \(\Phi\) is either \(0\) or \(\pi\). Equations
\eqref{eq:cp1} and \eqref{eq:cp2} tell us that \(X=kY\) and \(Y=kX\),
so \(X=k^2X\) and \(k=\pm 1\). Indeed, since both \(X\) and \(Y\) are
positive, this implies \(k=1\) and \(X=Y\). Since \(|k_j|=|k|=1\),
this implies that \(k_j=\pm 1\).

Equation \eqref{eq:cp4} becomes \(R_j-c=-k_je^{R_j}\), or
\[(R_j-c)e^{-(R_j-c)}=-k_je^c.\] If \(k_j<0\) then, provided
\(c>-1\), this has no solutions (as \(xe^{-x}\leq e^{-1}\) for all
\(x\)). So if we choose \(c>-1\) we need to take \(k_j=1\). In this
case, there is a unique solution\footnote{In fact, \(R(c)=c-\mathcal{W}(e^c)\)
where \(\mathcal{W}\) is the Lambert \(W\)-function.} \(R(c)\) to the equation
\((R-c)e^{-(R-c)}=-e^c\). This gives a circle of solutions \[
X=Y=\sqrt{e^{i\Phi}+2e^{R(c)}},\quad R_j=R(c),\quad \theta_j=\Phi,
\] (as \(\alpha\) and \(\beta\) vary subject to
\(\alpha+\beta=\Phi\)) provided \(e^{i\Phi}+2e^{R(c)}\geq 0\).

If \(e^{i\Phi}=1\) then \(e^{i\Phi}+2e^{R(c)}>0\), giving a circle
of solutions. If \(e^{i\Phi}=-1\) then we can ensure there are no
solutions by taking \(c\) sufficiently small. Indeed, if \(c<-0.3\)
then \(\ln 2<\mathcal{W}(e^c)-c\) so \(-1+2e^{R(c)}<0\).

{\bf Case \(XY=0\):} If one of \(X\) or \(Y\) vanishes, then so does
the other (using Equations \eqref{eq:cp1} and \eqref{eq:cp2}). One
could find the critical points from the statement of the lemma by a
detailed computation as in the other case, but there is a nice
``picture-proof'' in this case. The subset of points in \(W\) with
\(x=y=0\) is the curve \(C=\{(u_1,u_2)\in(\CC^*)^2\ :\ u_1+u_2+1=0\}\)
in \((\CC^*)^2\). If we draw the image of this curve under the map
\((R_1,R_2)\colon W\to\RR^2\) then we get the amoeba shown below. We
also show the level sets of \(\psi|_C\) as dotted circles, and it is
easy to see these critical points (blue dots). In red, we have given
an idea of how the gradient flowlines from these critical points look:
the downward manifold from each of \(P_2\) and \(P_3\) is an interval,
whose boundary points must tend to the index 0 critical point \(P_1\)
in the limit. Note that it is not true in general that one can find
the critical points and flowlines by restricting to a submanifold in
this way, but in this case, the Hessian is positive definite on the
normal directions to \(C\). \qedhere

\begin{center}
\begin{tikzpicture}[scale=1.2]
\draw (-2,0) -- (2,0);
\draw (0,-2) -- (0,2);
\draw[thick] (-2,-0.1) to[out=0,in=90] (-0.1,-2);
\draw[thick] (-2,0.1) to[out=0,in=-135] (1.9,2);
\draw[thick] (0.1,-2) to[out=90,in=-135] (2,1.9);
\draw[dashed] (-1,-1) circle [radius=0.48];
\draw[dashed] (-1,-1) circle [radius=1.18];
\filldraw (-1,-1) circle [radius=0.02];
\filldraw[blue] (-135:0.915) circle [radius=0.05];
\node[blue] at (-135:0.915) [above right] {\(P_1\)};
\filldraw[blue] (-1.2,0.16) circle [radius=0.05];
\node[blue] at (-1.2,0.16) [above] {\(P_3\)};
\filldraw[blue] (0.16,-1.2) circle [radius=0.05];
\node[blue] at (0.16,-1.2) [right]{\(P_2\)};
\draw[thick,red] (-1.2,0.16) to[out=0,in=110] (-135:0.915);
\draw[thick,red] (0.16,-1.2) to[out=90,in=0] (-135:0.915);
\draw[dotted,thick,red] (-1.2,0.16) to[out=-90,in=130] (-135:0.915);
\draw[dotted,thick,red] (0.16,-1.2) to[out=180,in=-20] (-135:0.915);
\node (A) at (-3.2,-0.7) {\((-\ln 2,-\ln 2)\)};
\node (B) at (-135:0.915) {};
\draw[->] (A) -- (B);
\node (C) at (-3,-1.7) {\((c,c)\)};
\draw[->] (C.east) to[bend right] (-1.05,-1.05);
\end{tikzpicture}
\end{center}
\end{proof}

\begin{lem}\label{lma:zariski}
The fundamental group \(\pi_1(W)\) is abelian.
\end{lem}
\begin{proof}
There is an affine conic fibration \(\pi\colon W\to(\CC^*)^2\),
\(\pi(x,y,u_1,u_2)=(u_1,u_2)\), with singular fibres over the curve
\(u_1+u_2+1=0\). Let \(U\subset(\CC^*)^2\) be the complement of this
curve and \(V=\pi^{-1}(U)\). Since \(V\) is a Zariski open set in
\(W\), the inclusion map \(i\colon V\to W\) induces a surjection
\(i_*\colon\pi_1(V)\to\pi_1(W)\).

Note that \(U\) is a 4-dimensional pair-of-pants; \(U\) therefore
deformation retracts onto the 2-skeleton of a 3-torus, so
\(\pi_1(U)=\ZZ^3\). Let \(\gamma\) be a circle in a smooth fibre of
\(\pi\) onto which the fibre deformation retracts. The fundamental
group of \(V\) is a central extension of \(\pi_1(U)\) by
\(\ZZ\langle\gamma\rangle\). We have \(i_*(\gamma)=0\) because the
loop \(\gamma\) is a vanishing cycle for the conic
fibration. Therefore \(\pi_1(W)\) is a quotient of
\(\pi_1(U)=\ZZ^3\), hence abelian. \qedhere

\end{proof}
\begin{lem}\label{lma:negvtx2}
Let \(\phi\colon \partial(D^2\times S^1)\to S^1\vee S^1\) be the
attaching map for the solid torus to the 1-skeleton. Then, after
possibly precomposing with a diffeomorphism of the solid torus and
postcomposing with a conjugation,
\(\phi_*\colon\ZZ^2\to\ZZ\star\ZZ=\langle a,b\rangle\) satisfies
\(\phi_*(1,0)=aba^{-1}b^{-1}\), \(\phi_*(0,1)=1\). This determines
\(\phi\) completely up to free homotopy (and precomposition by a
diffeomorphism of the solid torus).
\end{lem}
\begin{proof}
The fact that \(\phi\) is determined by \(\phi_*\) follows from the
fact that \(S^1\vee S^1\) is an Eilenberg-MacLane space.

The image of \(\phi_*\) is a subgroup of a free group and hence
free, however it is also abelian, so it is either trivial or has
rank 1. In other words, \(\phi_*(1,0)=c^m\) and \(\phi_*(0,1)=c^n\)
for some \(c\in\langle a,b\rangle\) and \(m,n\in\ZZ\). Suppose that
\((1,0)\) is the loop in \(T^2\) which bounds a disc in the solid
torus.

Van Kampen's theorem tells us that
\(\pi_1(W)=\langle a,b\ |\ c^n\rangle\). By Lemma \ref{lma:zariski},
\(\pi_1(W)\) is abelian. Therefore \(aba^{-1}b^{-1}\) is contained in
the normal subgroup generated by \(c^n\), so
\(aba^{-1}b^{-1}=hc^{nn'}h^{-1}\) for some \(h\in\langle a,b\rangle\),
\(n'\in\ZZ\). However, \(aba^{-1}b^{-1}\) is not a nontrivial power in
the free group, so \(n=n'=1\) and \(aba^{-1}b^{-1}=hch^{-1}\).

The Dehn twist around the loop \((0,1)\) in \(T^2\) extends to a
diffeomorphism of the solid torus, and precomposing with a power of
this diffeomorphism allow us to change \(m\) by a multiple of
\(n\). Since \(n=1\), we can achieve \(m=0\). This proves the
lemma. \qedhere

\end{proof}

\section{Dual complexes}
\label{sct:dualcomplex}
\subsection{Definition}

Let \(Y\) be a pure-dimensional simple normal crossing variety of
dimension \(n-1\) (see Definition 1.8 of \cite{Kollar2013}), for
example, a simple normal crossing divisor as in Section
\ref{sct:ncd}. Note that \(Y\) is stratified by the intersections of
its irreducible components \(Y_i\), i.e.
\[S_d(Y)=\bigcup_{J\subseteq I\ :\ |J|=n-d}Y^0_J,\] where
\(Y^0_J:=\left(\bigcap_{j\in
    J}Y_j\right)\setminus\left(\bigcup_{j\not\in J}Y_j\right)\).

\begin{Definition}[Dual complex]\label{dfn:dualcomplex}
The {\em dual complex} \(\mathcal{D}(Y)\) of \(Y\) is a regular
\(\Delta\)-complex {\cite[Section 2.1]{H02}} whose vertices are in
correspondence with the irreducible components \(Y_i\) and whose
\(d\)-cells correspond to connected components of
\(S_{n-d-1}(Y)\). Like any \(\Delta\)-complex, the dual complex is
stratified (the \(d\)-stratum is the union of its open \(d\)-cells).

\end{Definition}
\begin{Definition}[Maximal intersection]\label{dfn:maximalintersection}
We say that \(Y\) has {\em maximal intersection} if it admits a
stratum of dimension zero. Equivalently, the corresponding cells of
\(\mathcal{D}(Y)\) have real dimension \(\dim_{\CC}(Y)\), and we say
that \(\mathcal{D}(Y)\) has maximal dimensional.

\end{Definition}
\subsection{Dual boundary complexes of affine varieties}

Let \(Y\subset X\) be a simple normal crossing divisor of dimension
\(n-1\). In this section, we study the homotopy type of the dual
complex \(\mathcal{D}(Y)\), under the assumption that \(X \setminus
Y\) is affine.

First, recall the following result due to Danilov {\cite[Proposition
3]{Danilov1975}}.

\begin{Proposition}\label{prp:danilov}
If one of the irreducible components of \(Y\) is ample, then
\(\mathcal{D}(Y)\) has the homotopy type of a bouquet of spheres.

\end{Proposition}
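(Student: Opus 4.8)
The plan is to prove \cref{prp:danilov} by showing that ampleness of one component, which we may call $Y_0$, forces $\mathcal D(Y)$ to be $(n-2)$-connected, where $n=\dim_\CC X$ (so $\dim_\CC Y=n-1$, and hence $\dim\mathcal D(Y)\le n-1$). A finite CW complex of dimension $\le n-1$ that is $(n-2)$-connected is contractible or homotopy equivalent to a wedge of $(n-1)$-spheres; this is in particular a bouquet of spheres, so it suffices to establish the connectivity statement. (Ampleness of $Y_0$ also makes $X\setminus Y$ affine, which is why the proposition fits in this section, though the affineness is not what I will use directly.)

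The key point is purely combinatorial. Because $Y_0$ is an ample divisor in the projective variety $X$, it meets every closed subvariety of $X$ of positive dimension. Let $v_0$ be the vertex of $\mathcal D(Y)$ corresponding to $Y_0$. Suppose $\sigma$ is a $d$-simplex of $\mathcal D(Y)$ with $d\le n-2$ and $v_0\notin\sigma$; then $\sigma$ corresponds to a connected component $Z$ of some $Y_J=\bigcap_{j\in J}Y_j$ with $0\notin J$, $|J|=d+1$, and $\dim_\CC Z=n-1-d\ge 1$. Hence $Y_0\cap Z\ne\emptyset$; by the simple normal crossing hypothesis $Y_0$ meets $Z$ transversally, so any connected component $W$ of $Y_0\cap Z$ is a connected component of $Y_{J\cup\{0\}}$, and the datum $(J\cup\{0\},W)$ defines a $(d+1)$-simplex $v_0*\sigma$ of $\mathcal D(Y)$ having $\sigma$ as a face. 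Consequently the $(n-2)$-skeleton of $\mathcal D(Y)$ is contained in the closed star $\overline{\operatorname{St}}(v_0)$ of $v_0$, which equals the cone $v_0*\operatorname{Lk}(v_0)$ and is therefore contractible.

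It follows that $\mathcal D(Y)$ is obtained from $\overline{\operatorname{St}}(v_0)$ by attaching cells of dimension $n-1$ only, so $\pi_i(\mathcal D(Y),\overline{\operatorname{St}}(v_0))=0$ for $i\le n-2$, and the long exact sequence of the pair together with contractibility of $\overline{\operatorname{St}}(v_0)$ gives $\pi_i(\mathcal D(Y))=0$ for $i\le n-2$. For $n\le 2$ this already identifies $\mathcal D(Y)$ as a nonempty finite discrete set (when $n=1$) or as a connected finite graph (when $n=2$), each of which is a wedge of spheres. For $n\ge 3$, $\mathcal D(Y)$ is simply connected with reduced integral homology concentrated in the top degree $n-1$, where it is free (being the top homology of an $(n-1)$-dimensional complex); Hurewicz gives $\pi_{n-1}(\mathcal D(Y))\cong H_{n-1}(\mathcal D(Y))\cong\ZZ^{r}$, and a map $\bigvee^{r}S^{n-1}\to\mathcal D(Y)$ realizing a basis induces an isomorphism on all homology between simply connected complexes, hence is a homotopy equivalence by Whitehead's theorem.

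I expect the main technical nuisance to be the bookkeeping with connected components of possibly reducible strata $Y_J$ in the combinatorial step; the rest is standard homotopy theory. The geometric hypothesis enters exactly once, to guarantee $Y_0\cap Z\ne\emptyset$, and it is essential: if $Y$ only supports an ample $\QQ$-divisor $\sum_i m_iY_i$ with no single ample component, this argument breaks, and one must instead argue homologically, identifying the weight-zero part of $H^{*}_c(X\setminus Y;\QQ)$ with the simplicial cochain complex of $\mathcal D(Y)$ via Deligne's weight spectral sequence and then invoking Artin vanishing for the affine variety $X\setminus Y$; that route only yields the $(n-2)$-connectivity rationally.
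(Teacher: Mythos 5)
The paper does not actually prove this statement -- it is quoted from Danilov \cite[Proposition 3]{Danilov1975} -- so your proposal has to be judged on its own terms, and it has a genuine gap. The gap is the sentence asserting that the closed star satisfies \(\overline{\operatorname{St}}(v_0)=v_0*\operatorname{Lk}(v_0)\) and is therefore contractible. That identity holds for simplicial complexes, but \(\mathcal{D}(Y)\) is only a regular \(\Delta\)-complex (\cref{dfn:dualcomplex}): the strata \(Y_J\) can be disconnected, so distinct cells may share their entire vertex set, and then the closed star is a proper quotient of the cone on the link rather than the cone itself. A concrete counterexample to your claim, satisfying all hypotheses: \(X=\cp{2}\), \(Y_0\) a smooth conic (ample), \(Y_1\) a transverse line, meeting in two points. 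Then \(\mathcal{D}(Y)\) consists of two vertices joined by two edges, i.e.\ a circle, and \(\overline{\operatorname{St}}(v_0)\) is the whole circle -- not contractible (the cone on the link is an interval which the gluing wraps onto the circle). In this two-dimensional case your final paragraph happens not to need the contractibility, but for \(n\geq 3\) both the simple connectivity of \(\mathcal{D}(Y)\) and the vanishing of \(\widetilde{H}_i\) for \(i\leq n-2\) in your argument rest entirely on that false claim, so the proof as written does not go through exactly in the range where the content lies.

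Everything before that point is correct and is indeed the right geometric input: ampleness of \(Y_0\) forces every stratum of complex dimension \(\geq 1\) to meet \(Y_0\), hence every cell of dimension \(\leq n-2\) lies in \(\overline{\operatorname{St}}(v_0)\), and your bookkeeping with connected components of \(Y_J\) is fine; likewise the final Hurewicz--Whitehead step is standard once \((n-2)\)-connectivity is known. But to repair the argument you would need \(\overline{\operatorname{St}}(v_0)\) to be \((n-2)\)-connected, and that is not formal: it requires using more geometry, e.g.\ that \(Y_0\cap Y_J\) is \emph{connected} whenever \(\dim_\CC Y_J\geq 2\) (an effective ample divisor on a connected projective variety of dimension \(\geq 2\) is connected), which makes the low-codimension cells of the star unique and lets one push loops (and, inductively, higher spheres) into the cone-shaped part. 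Carrying this out is essentially re-proving Danilov's proposition rather than a routine patch, so as it stands the proposal is incomplete; alternatively one can simply argue as Danilov does, or retreat to the weaker rational statements proved in \cref{prp:rationalcohomology,prp:pi1,prp:htpytype}, which is exactly why the paper states those refinements separately.
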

The hypothesis of Proposition \ref{prp:danilov} is too restrictive for
our purposes (for instance, it does not include all toric
boundaries). Moreover, the statement does not provide any control on
the number of spheres in the bouquet.

The following propositions can be regarded as generalizations and
refinements of Proposition \ref{prp:danilov}, and they are inspired by
\cite{Payne2013} and \cite{KollarXu2016}.

\begin{Proposition}[Rational cohomology]\label{prp:rationalcohomology}
Let \(Y \subset X\) be a simple normal crossing divisor of dimension
\(n-1\) and \(W := X \setminus Y\) be an affine variety. Then,
\begin{equation} \label{vanishingPayne} h^i(\mathcal{D}(Y), \QQ)=0
\qquad 0 < i < n-1. \end{equation} If \(X\) has Hodge coniveau
\(\geq 1\), i.e. \(h^{0,i}(X)=0\) for all \(i >0\), and \(Y_J: =
\bigcap_{j \in J} Y_j\) does not admit global holomorphic canonical
sections for any \(J \subseteq I\) (e.g. if \(X\) and \(Y_J\) are
rationally connected), then
\begin{equation} \label{topgroup}
h^{n-1}(\mathcal{D}(Y), \QQ)=h^0(X, K_X + Y).
\end{equation}
\end{Proposition}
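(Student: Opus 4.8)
The plan is to read the cohomology of $\mathcal D(Y)$ off the weight filtration on $H^\bullet(Y)$ (for \eqref{vanishingPayne}) and on $H^\bullet_c(W)$ (for \eqref{topgroup}), in the spirit of \cite{Payne2013,KollarXu2016}. Two standard inputs are needed. First, writing $Y^{(p)}:=\coprod_{|J|=p}Y_J$ (with $Y^{(0)}=X$), the Deligne weight spectral sequence of the proper simple normal crossing variety $Y$ has $E_1^{p,q}=H^q(Y^{(p+1)},\QQ)$, degenerates at $E_2$, and satisfies $E_2^{p,q}=\operatorname{Gr}^W_q H^{p+q}(Y,\QQ)$; its $q=0$ row is precisely the simplicial cochain complex of the $\Delta$-complex $\mathcal D(Y)$ (its $p$-cochains being $H^0(Y^{(p+1)},\QQ)$, one generator per connected component of $S_{n-p-1}(Y)$, i.e. per $p$-cell, since removing the deeper strata from a smooth component does not change $\pi_0$), so $\operatorname{Gr}^W_0 H^p(Y,\QQ)\cong H^p(\mathcal D(Y),\QQ)$ for all $p$. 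Second, the Gysin long exact sequence of mixed Hodge structures $\cdots\to H^k_c(W)\to H^k(X)\xrightarrow{r_k}H^k(Y)\to H^{k+1}_c(W)\to\cdots$. The one place affineness is used: since $W$ is affine of dimension $n$, Artin vanishing gives $H^k(W,\QQ)=0$ for $k>n$, hence $H^k_c(W,\QQ)=0$ for $k<n$ by Poincaré duality on the smooth $2n$-manifold $W$.

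Granting this, \eqref{vanishingPayne} is immediate: for $1\le k\le n-2$ both $H^k_c(W)$ and $H^{k+1}_c(W)$ vanish, so $r_k$ is an isomorphism of mixed Hodge structures; since $H^k(X)$ is pure of weight $k$, so is $H^k(Y)$, whence $H^k(\mathcal D(Y),\QQ)=\operatorname{Gr}^W_0 H^k(Y,\QQ)=0$. For \eqref{topgroup} I would take $k=n-1$: as $H^{n-1}_c(W)=0$, the Gysin sequence yields an exact sequence $0\to H^{n-1}(X)\to H^{n-1}(Y)\to H^n_c(W)\to H^n(X)$ of mixed Hodge structures. Applying the exact functor $\operatorname{Gr}^W_0$ and using that $H^{n-1}(X)$ and $H^n(X)$ are pure of positive weight (so both weight-$0$ graded pieces vanish — here one may assume $n\ge 2$, the statement being vacuous for $n=1$), I obtain $H^{n-1}(\mathcal D(Y),\QQ)\cong\operatorname{Gr}^W_0 H^n_c(W,\QQ)$, and Poincaré duality ($H^n_c(W)\cong H^n(W)^*(-n)$) identifies this dimension with $\dim\operatorname{Gr}^W_{2n}H^n(W)$. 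So everything reduces to the equality $\dim\operatorname{Gr}^W_{2n}H^n(W)=h^0(X,K_X+Y)$.

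For this last equality I would compare two descriptions of the top step of the Hodge filtration on $H^n(W)$. On one hand, Deligne's log-de Rham model $\Omega^\bullet_X(\log Y)$, concentrated in degrees $0,\dots,n$, gives $F^{n+1}H^n(W)=0$ and $F^nH^n(W,\CC)=H^0(X,\Omega^n_X(\log Y))=H^0(X,\mathcal O_X(K_X+Y))$, using $\Omega^n_X(\log Y)\cong\omega_X(Y)$ for $Y$ simple normal crossing. On the other hand, the weight spectral sequence of $W$ realizes $\operatorname{Gr}^W_{n+j}H^n(W)$ as a subquotient of $H^{n-j}(Y^{(j)},\QQ)(-j)$ for $0\le j\le n$ (with $Y^{(0)}=X$), and the degree-$n$ Hodge piece of this term is $F^{n-j}H^{n-j}(Y^{(j)})=H^0(Y^{(j)},\omega_{Y^{(j)}})$, since $Y^{(j)}$ is smooth projective of dimension $n-j$. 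For $j=0$ this is $H^{n,0}(X)=0$ by the coniveau hypothesis (Hodge symmetry: $h^{n,0}(X)=h^{0,n}(X)=0$), and for $1\le j\le n-1$ it is $\bigoplus_{|J|=j}H^0(Y_J,K_{Y_J})=0$ by the assumption on the (positive-dimensional) strata $Y_J$; hence $F^n\operatorname{Gr}^W_{n+j}H^n(W)=0$ for $0\le j\le n-1$. Comparing graded dimensions of the filtered vector space $F^nH^n(W,\CC)$ (using $\dim F^pH=\sum_k\dim F^p\operatorname{Gr}^W_kH$ for a mixed Hodge structure) therefore gives $\dim F^nH^n(W)=\dim F^n\operatorname{Gr}^W_{2n}H^n(W)$; and since $\operatorname{Gr}^W_{2n}H^n(W)$ is pure of weight $2n$ with $F^{n+1}=0$ it is Hodge--Tate of type $(n,n)$, so its $F^n$ part is all of it. Stringing the chain together, $h^{n-1}(\mathcal D(Y),\QQ)=\dim\operatorname{Gr}^W_0 H^n_c(W)=\dim\operatorname{Gr}^W_{2n}H^n(W)=\dim F^nH^n(W)=h^0(X,K_X+Y)$.

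The conceptual ingredients — Artin vanishing plus Poincaré duality, and Deligne's mixed Hodge theory (the Gysin sequence, the log-de Rham model, and degeneration of the weight spectral sequences) — are all standard, so the real work is bookkeeping. The main obstacle I anticipate is keeping the Tate twists and the interplay of the Hodge and weight filtrations straight across the two weight spectral sequences: in particular, making sure the vanishing $F^n\operatorname{Gr}^W_{n+j}H^n(W)=0$ is both needed and available precisely for $0\le j\le n-1$, with the $j=0$ term (the contribution of $X$ itself) disposed of by the coniveau hypothesis and the remaining terms by the assumption on the $Y_J$, and — on the purely topological side — verifying carefully that the weight-zero row of the weight spectral sequence of $Y$ is the cochain complex of $\mathcal D(Y)$ with its $\Delta$-complex structure, accounting for strata $Y_J$ that are reducible.
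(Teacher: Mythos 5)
Your argument is correct, and for the vanishing statement \eqref{vanishingPayne} it is the same as the paper's (which simply cites Payne for exactly the Artin-vanishing-plus-purity argument you reproduce), but for \eqref{topgroup} you take a genuinely different route. The paper stays entirely within coherent sheaf theory: it identifies \(H^{n-1}(\mathcal{D}(Y),\QQ)\) with \(H^{n-1}(Y,\mathcal{O}_Y)\) via the Mayer--Vietoris spectral sequence \(E_1^{p,q}=\bigoplus_{|J|=q+1}H^p(Y_J,\mathcal{O}_{Y_J})\) (degenerate at \(E_2\) by Friedman), where the no-canonical-sections hypothesis kills the top-degree terms \(H^{\dim Y_J}(Y_J,\mathcal{O}_{Y_J})\) by Serre duality, and then uses \(0\to\mathcal{O}_X(-Y)\to\mathcal{O}_X\to\mathcal{O}_Y\to 0\), the coniveau hypothesis \(H^{n-1}(X,\mathcal{O}_X)=H^{n}(X,\mathcal{O}_X)=0\), and Serre duality to get \(H^{n-1}(Y,\mathcal{O}_Y)\simeq H^0(X,K_X+Y)^{\vee}\). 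You instead run the computation through mixed Hodge theory on the open variety: \(h^{n-1}(\mathcal{D}(Y))=\dim\operatorname{Gr}^W_0H^{n-1}(Y)=\dim\operatorname{Gr}^W_0H^n_c(W)=\dim\operatorname{Gr}^W_{2n}H^n(W)=\dim F^nH^n(W)=h^0(X,K_X+Y)\), using the weight spectral sequence of \(W\) (with the hypotheses killing \(F^n\) on the graded pieces of weight \(<2n\)) and the log-de Rham model for \(F^n\). Your chain of identifications is sound (including the telescoping identity \(\dim F^p=\sum_k\dim F^p\operatorname{Gr}^W_k\), strictness for subquotients of pure Hodge structures, and the \(\pi_0\) bookkeeping for the \(q=0\) row), and the implicit restriction to \(n\geq 2\) that you flag is also implicit in the paper. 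What the two approaches buy: the paper's proof is shorter and more elementary, essentially three lines of coherent cohomology once Friedman's degeneration is invoked; yours uses heavier machinery but is more transparent about where each hypothesis enters --- it needs only \(h^{0,n}(X)=0\) from the coniveau assumption (purity of \(H^{n-1}(X)\), \(H^n(X)\) replaces the rest), it exhibits \(h^{n-1}(\mathcal{D}(Y))\) as \(\dim\operatorname{Gr}^W_{2n}H^n(W)\) in the weight-filtration spirit of Payne and Koll\'ar--Xu, and it makes explicit that affineness of \(W\) is used only for the vanishing \eqref{vanishingPayne} (indeed, in both proofs, \eqref{topgroup} does not use it).
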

\begin{proof}
  The vanishing (Equation \eqref{vanishingPayne}) of the rational
  cohomology is noted in {\cite[Section 6]{Payne2013}}.

  In order to prove Equation \eqref{topgroup}, we identify the
  \((n-1)\)th cohomology group of \(\mathcal{D}(Y)\) with the
  \((n-1)\)th cohomology group of the structure sheaf
  \(\mathcal{O}_Y\). Note indeed that the cohomology of
  \(\mathcal{O}_Y\) is computed by a spectral sequence whose page
  \(E_1\) is given by
  \[E^{p,q}_1:= \bigoplus_{J \subseteq I, |J|=q+1} H^{p}(Y_J,
    \mathcal{O}_{Y_J}), \] and which degenerates at the page \(E_2\);
  see {\cite[Proof of Proposition 1.5.3]{Friedman1983}}. Since
  \(H^{\dim Y_J}(Y_J, \mathcal{O}_{Y_J})=0\) for any
  \(J \subseteq I\), and \(\dim Y_J>0\), we have that
  \[ H^{n-1}(Y, \mathcal{O}_Y)=E^{0,n-1}_2 = H^{n-1}(\mathcal{D}(Y),
    \QQ), \] since the complex \((E^{0, *}_1, d_1)\) computes the
  cellular cohomology of \(\mathcal{D}(Y)\).

Further, the short exact sequence \[ 0 \to \mathcal{O}_X(-Y) \to
\mathcal{O}_X \to \mathcal{O}_Y \to 0\] induces the following
isomorphism in cohomology \[H^{n-1}(Y, \mathcal{O}_Y) \simeq
H^{n}(X, \mathcal{O}_X(-Y))\simeq H^0(X, K_X + Y)^{\vee},\] since by
hypothesis \(H^i(X, \mathcal{O}_X)=0\) for \(i >0\). We conclude
that \(H^{n-1}(\mathcal{D}(Y), \QQ)\simeq H^0(X, K_X + Y)^{\vee}\), as
desired. \qedhere

\end{proof}
\begin{Proposition}[Fundamental group]\label{prp:pi1}
Let \(Y \subset X\) be a simple normal crossing divisor of dimension
\(\geq 2\) and \(W := X \setminus Y\) be an affine variety. If \(X\)
is simply connected (e.g. if \(X\) is rationally connected), then
\(\mathcal{D}(Y)\) is so as well.
\end{Proposition}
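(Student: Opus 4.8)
The plan is to compute the fundamental group of $\mathcal{D}(Y)$ by relating it to that of $W = X \setminus Y$ via a tubular-neighbourhood argument, exploiting the affineness of $W$ to kill $\pi_1$. First I would recall that a punctured tubular neighbourhood $\nu^\circ$ of $Y$ in $X$ deformation retracts onto a space whose homotopy type is controlled by the combinatorics of the stratification of $Y$: concretely, there is a homotopy equivalence (or at least a $\pi_1$-isomorphism) between $\nu^\circ$ and a space built as an iterated mapping torus / plumbing, and the combinatorial skeleton of this space surjects onto $\mathcal{D}(Y)$. The key point to extract is that $\pi_1(\mathcal{D}(Y))$ is a quotient of $\pi_1(\nu^\circ)$, since collapsing the circle-bundle (and torus-bundle) directions of $\nu^\circ$ over each stratum realises the projection to $\mathcal{D}(Y)$; the loops that get killed are exactly the meridian loops around the components $Y_i$. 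In fact it is cleaner to argue that $\mathcal{D}(Y)$ is homotopy equivalent to $\nu^\circ / (\text{fibre directions})$ and that this quotient map induces a surjection on $\pi_1$, because the fibres are connected.

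Next I would bring in the affineness hypothesis. Since $W$ is a smooth affine variety of complex dimension $n \geq 3$, by the Andreotti--Frankel / Artin vanishing theorem $W$ has the homotopy type of a CW complex of real dimension $\leq n$, but more usefully, we should use that $\nu^\circ \subset W$ and that the inclusion $\nu^\circ \hookrightarrow W$ (or $W \hookrightarrow X$) lets us compare fundamental groups. The cleanest route: the link $\partial \overline{W}$ at infinity maps onto $\mathcal{D}(Y)$ by the coisotropic fibration constructed in Section~\ref{sct:evaluationmap} (whose base is exactly $\mathcal{D}(Y)$), so $\pi_1(\mathcal{D}(Y))$ is a quotient of $\pi_1(\Link(W))$. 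Then I would use the long exact sequence of the pair $(X, W)$, or the Gysin/Thom--Gabber sequence for the normal crossing divisor, together with $\pi_1(X) = 1$: the Mayer--Vietoris / van Kampen decomposition $X = W \cup \nu(Y)$ gives $\pi_1(X) = \pi_1(W) *_{\pi_1(\nu^\circ)} \pi_1(\nu(Y))$, and since $\pi_1(X) = 1$ one deduces that $\pi_1(\nu^\circ)$ is normally generated by the images of $\pi_1(W)$ and $\pi_1(\nu(Y)) = \pi_1(Y)$. The meridians die in $\pi_1(W)$... actually one needs that the meridian loops normally generate $\ker(\pi_1(\nu^\circ) \to \pi_1(\mathcal{D}(Y)))$, and that these meridians, being the vanishing loops of the ambient compactification, also map to elements that become trivial once we pass to $\mathcal{D}(Y)$; combining with $\pi_1(X)=1$ and the fact that $\pi_1(Y_J)$-contributions are absorbed into the cells, every loop in $\mathcal{D}(Y)$ is nullhomotopic.

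A more robust way to organise the last step, and the one I would ultimately write up: use that $\mathcal{D}(Y)$ records only the "combinatorial" $\pi_1$, so $\pi_1(\mathcal{D}(Y))$ is generated by loops crossing between adjacent top-dimensional cells, i.e. by loops in the incidence graph of the top strata of $Y$; such a loop lifts to a loop in $Y$ passing through the components, hence (after pushing into $X$) to a loop in $X$, which bounds since $\pi_1(X) = 1$; a nullhomotopy in $X$ can be made transverse to $Y$ and to the strata of $Y$, and the intersection pattern of the disc with the stratification exhibits a nullhomotopy of the original loop in $\mathcal{D}(Y)$ (here one uses that $\dim Y \geq 2$, so that the disc can be made to avoid the codimension-$\geq 3$ strata where the combinatorics could obstruct the argument, and that crossing a single component contributes a trivial loop in $\mathcal{D}(Y)$ since it just moves within the star of a vertex). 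I expect the main obstacle to be precisely this transversality-and-stratification bookkeeping: making the nullhomotopy disc in $X$ compatible with the normal crossing stratification and checking that each elementary move (the disc crossing a stratum of $Y$) corresponds to an elementary homotopy in $\mathcal{D}(Y)$ rather than introducing new relations — this is where the hypothesis $\dim Y \geq 2$ is essential, since in dimension $1$ the dual complex is a graph and can easily have free $\pi_1$ (cf.\ the pair-of-pants example, where $\mathcal{D}(Y)$ is the $1$-skeleton of a tetrahedron). The affineness enters to guarantee, via \cref{prp:rationalcohomology} applied in low degrees and via the connectivity of the link, that there are no "extra" one-dimensional features of $\mathcal{D}(Y)$ beyond those forced by $\pi_1(X)$.
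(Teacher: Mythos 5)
Your proposal contains one of the two ingredients of the paper's proof --- the surjection from the fundamental group of (a neighbourhood of, or the link of) \(Y\) onto \(\pi_1(\mathcal{D}(Y))\), which is indeed induced by the evaluation map of \cref{dfn:evaluation} (cf.\ {\cite[Lemma 26]{KollarXu2016}}) --- but it never establishes the second, decisive ingredient, and the substitutes you offer do not work. The paper's argument is simply: by the Lefschetz hyperplane theorem (in the form: \(W=X\setminus Y\) affine with \(\dim_{\CC}X=n\geq 3\) implies \(\pi_1(Y)\to\pi_1(X)\) is an isomorphism), \(\pi_1(Y)\simeq\pi_1(X)=1\); hence \(\pi_1(\mathcal{D}(Y))\), being a quotient of \(\pi_1(Y)\), is trivial. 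That Lefschetz step is exactly where the hypotheses ``\(W\) affine'' and ``\(\dim Y\geq 2\)'' are used (via Andreotti--Frankel, \(X\) is obtained up to homotopy from a neighbourhood of \(Y\) by attaching cells of dimension \(\geq n\geq 3\)). You mention Andreotti--Frankel but then set it aside, and nowhere in your write-up is \(\pi_1(Y)=1\) (or any equivalent statement) actually proved.

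Concretely, both routes you propose in its place have gaps. The van Kampen computation \(1=\pi_1(X)=\pi_1(W)\ast_{\pi_1(\nu^\circ)}\pi_1(\nbhd(Y))\) by itself gives no control on \(\pi_1(\mathcal{D}(Y))\): a loop of \(\nu^\circ\) may die in \(X\) because of relations coming entirely from the \(W\)-side, and such relations are invisible in \(\mathcal{D}(Y)\) (there is no map from \(W\) to the dual complex); the sentence deducing that ``every loop in \(\mathcal{D}(Y)\) is nullhomotopic'' is an assertion, not an argument, and the intermediate claim that \(\pi_1(\nu^\circ)\) is ``normally generated by the images of \(\pi_1(W)\) and \(\pi_1(Y)\)'' has the arrows backwards. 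The transversality route fails for the same reason: after lifting a loop of \(\mathcal{D}(Y)\) to \(Y\) and bounding it by a disc in \(X\), a generic disc meets \(Y\) in finitely many points and otherwise lives in \(W\), so its intersection pattern with the stratification produces no homotopy in \(\mathcal{D}(Y)\); to make the disc useful you must first push it into a neighbourhood of \(Y\), and the statement that this is possible is precisely the relative \(\pi_2\)-vanishing furnished by the Lefschetz theorem you are trying to avoid. Finally, \cref{prp:rationalcohomology} cannot close the gap: \(H^1(\mathcal{D}(Y);\QQ)=0\) does not imply simple connectivity (perfect or torsion fundamental groups are not excluded). Once you insert the Lefschetz isomorphism \(\pi_1(Y)\simeq\pi_1(X)\), your first paragraph finishes the proof in two lines, exactly as in the paper.
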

\begin{proof}
  By the Lefschetz hyperplane theorem, \(\pi_1(X) \simeq
  \pi_1(Y)\). Note also that there is a natural surjective map
  \(\pi_1(Y) \twoheadrightarrow \pi_1(\mathcal{D}(Y))\), induced for
  instance by the evaluation map in Definition \ref{dfn:evaluation};
  see also {\cite[Lemma 26]{KollarXu2016}}. We conclude that
  \(\pi_1(\mathcal{D}(Y))\) is a quotient of \(\pi_1(X)\). \qedhere

\end{proof}
\begin{Proposition}[Homotopy type]\label{prp:htpytype}
  Let \(Y = \bigcup_{i \in I} Y_i \subset X\) be a simple normal
  crossing divisor such that \(X \setminus Y\) is an affine
  variety. Suppose that the full set of hypotheses of Proposition
  \ref{prp:rationalcohomology} hold.
\begin{itemize}
\item If \(\dim(X)=2\), then \(\mathcal{D}(Y)\) is a graph with \(h^0(X,
K+X+Y)\) loops.
\item If \(\dim(X)=3\) and \(X\) is simply-connected, then
\(\mathcal{D}(Y)\) has the homotopy type of a bouquet of \(h^0(X,
K_X + Y)\) spheres of dimension \(2\), unless it is contractible.
\end{itemize}
\end{Proposition}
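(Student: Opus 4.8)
The plan is to combine the cohomological computations of \cref{prp:rationalcohomology} with the simple-connectivity result \cref{prp:pi1}, and then invoke Whitehead's theorem. Throughout I write \(r:=h^0(X,K_X+Y)\), and I recall that \(\mathcal{D}(Y)\) is a finite regular \(\Delta\)-complex of dimension at most \(\dim_{\CC}Y=n-1\), which is \(1\) when \(\dim X=2\) and \(2\) when \(\dim X=3\).

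I would treat the surface case \(\dim X=2\) first. Here \(\mathcal{D}(Y)\) is a graph, and it is connected: since \(X\setminus Y\) is affine, Goodman's theorem gives a blow-up \(X'\to X\) centred in \(Y\) on which the total transform of \(Y\) supports an ample effective divisor — which is connected, by the Hodge index theorem for surfaces — so \(Y\) is connected and hence so is \(\mathcal{D}(Y)\); alternatively, the spectral sequence in the proof of \cref{prp:rationalcohomology} already identifies \(H^0(\mathcal{D}(Y),\QQ)\) with \(H^0(Y,\mathcal{O}_Y)=\CC\). A connected graph is homotopy equivalent to a wedge of \(\dim_{\QQ}H^1(\mathcal{D}(Y),\QQ)\) circles, and by \cref{topgroup} (with \(n=2\)) this number is \(r\). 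So \(\mathcal{D}(Y)\) is a graph with \(r\) loops (a tree if \(r=0\)).

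For the threefold case with \(X\) simply connected, \(\mathcal{D}(Y)\) has dimension at most \(2\). By \cref{prp:pi1} (applicable since \(\dim_{\CC}Y=2\geq 2\)), \(\mathcal{D}(Y)\) is simply connected; in particular it is connected and \(H_1(\mathcal{D}(Y),\ZZ)=0\). By \cref{vanishingPayne} we have \(H^1(\mathcal{D}(Y),\QQ)=0\), and by \cref{topgroup} we have \(\dim_{\QQ}H^2(\mathcal{D}(Y),\QQ)=r\). Since \(\mathcal{D}(Y)\) is a \(2\)-dimensional CW complex, \(H_2(\mathcal{D}(Y),\ZZ)\) is a subgroup of the free abelian group of cellular \(2\)-chains, hence free; comparing ranks with the rational computation gives \(H_2(\mathcal{D}(Y),\ZZ)\cong\ZZ^r\), and then the Hurewicz theorem gives \(\pi_2(\mathcal{D}(Y))\cong\ZZ^r\). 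If \(r=0\), then \(\mathcal{D}(Y)\) is a simply connected CW complex with vanishing reduced homology, hence contractible. If \(r\geq 1\), I would choose maps \(S^2\to\mathcal{D}(Y)\) realising a basis of \(\pi_2(\mathcal{D}(Y))\) and assemble them into a map \(f\colon\bigvee_{i=1}^{r}S^2\to\mathcal{D}(Y)\); by naturality of Hurewicz, \(f\) induces an isomorphism on \(H_2\), and it is trivially an isomorphism on every other homology group (both spaces being connected, simply connected, and \(2\)-dimensional with vanishing \(H_1\)). Whitehead's theorem then shows \(f\) is a homotopy equivalence, so \(\mathcal{D}(Y)\simeq\bigvee_{i=1}^{r}S^2\).

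I expect the argument to be essentially formal once \cref{prp:rationalcohomology} and \cref{prp:pi1} are in hand. The one point that needs genuine care is upgrading the rational cohomology of \cref{prp:rationalcohomology} to the integral homotopy type required by Whitehead's theorem: this is exactly where simple-connectivity (to kill torsion in \(H_1\)) and the freeness of \(H_2\) for a \(2\)-dimensional complex are used. A secondary, more routine, point is the connectedness of \(\mathcal{D}(Y)\) in the surface case, where \cref{prp:pi1} is unavailable.
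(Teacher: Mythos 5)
Your proposal is correct and follows essentially the same route as the paper: the surface case is read off from \cref{prp:rationalcohomology}, and the threefold case combines \cref{prp:pi1}, the torsion-freeness of the integral homology of the two-dimensional complex \(\mathcal{D}(Y)\), Hurewicz, and Whitehead, exactly as in the paper's argument. Your additional remarks (connectedness of \(\mathcal{D}(Y)\) when \(\dim X=2\), and the explicit treatment of the contractible case \(r=0\)) are fine elaborations of points the paper leaves implicit.
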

\begin{proof}
  In the two-dimensional case, the statement follows immediately from
  Proposition \ref{prp:rationalcohomology}.(\ref{topgroup}). Suppose
  now that \(\dim(X)=3\). Since \(X\) is simply-connected, then
  \(\mathcal{D}(Y)\) is simply-connected by Proposition
  \ref{prp:pi1}. For dimensional reasons, \(\mathcal{D}(Y)\) has
  torsion-free integral homology. Therefore Proposition
  \ref{prp:rationalcohomology} says that \(\mathcal{D}(Y)\) has the
  integral homology of a bouquet of \(h^0(X, K_X + Y)\) spheres of
  dimension \(2\), or of a point. In the former case, Hurewicz's
  theorem then gives a continuous map from such a bouquet to
  \(\mathcal{D}(Y)\) inducing an isomorphism on homology. Since
  \(\mathcal{D}(Y)\) is simply-connected, Whitehead's theorem implies
  that this map is a homotopy equivalence. \qedhere

\end{proof}
\section{The evaluation map}
\label{sct:evaluationmap}

In this section, we construct a map (the {\em evaluation map}) from
the link of a simple normal crossing divisor to the dual complex and
show that, with suitable choices, it is a coisotropic fibration.

We use the following standard notation: if \(S,T\)
are sets then \(S^T\) denotes the space of maps \(T\to S\).

\subsection{Evaluation map}

Let \(Y=\bigcup_{i\in I}Y_i\subset X\) be a simple normal crossing
divisor. Here and in the following, we can assume that
\(Y_J=\bigcap_{j \in J}Y_j\) is connected for any \(J \subset I\) (the
assumption is not essential but it makes the notation lighter and
allows us to realise \(\mathcal{D}(Y)\) as the image of the evaluation
map defined below. In fact, it can always be achieved via a sequence
of blowups along connected components of the strata of \(Y\)).

\begin{Definition}[Full partition of
  unity]\label{independentpartition}
  Let \(\{\chi_i\}_{i\in I}\) be a partition of unity subordinate to
  the open cover \(\{U_i\}_{i\in I}\). Then \(\{\chi_i\}_{i\in I}\) is
  called {\em full} if the following property holds: for any
  \(J\subseteq I\) such that
  \(U_J\coloneqq\bigcap_{j\in J}U_j\neq \emptyset\), the map
  \(\chi^J\colon U_J\to [0,1]^J\), given by \(\chi^J(j)=\chi_j(x)\),
  is surjective onto the open standard simplex in \((0,1]^J\) given by
  the equation \(\sum_{j\in J}y_j=1\).

  Concretely, this means that the values \(\chi_j(x)\) with
  \(j\in K\subseteq J\) do not impose any constraints on the values
  that the other functions \(\chi_{j'}(x)\) with
  \(j'\in J\setminus K\) can attain, with the exception of the
  relation \(\sum_{j\in J}\chi_j(x)=1\).

\end{Definition}
\begin{Definition}[Plumbing neighbourhood]\label{dfn:plumbingnbhd}
A {\em plumbing neighbourhood} \(N\) of \(Y\) is the union of
tubular neighbourhoods \(N_i\) of \(Y_i\) for all \(i \in I\).

\end{Definition}
\begin{Definition}[Evaluation map]\label{dfn:evaluation}
  Let \(\{\chi_i\}_{i\in I}\) be a full partition of unity subordinate
  to the open cover \(\{N_i\}_{i\in I}\) of the plumbing neighbourhood
  \(N\). Then the {\em evaluation map}
  \[ev: \Link(X \setminus Y) \subset N \to \RR^I \] is given by
  \(ev(x)(i)=\chi_i(x)\).

\end{Definition}
\begin{lem}
The image of \(ev\) is homeomorphic to the dual complex
\(\mathcal{D}(Y)\).
\end{lem}
\begin{proof}
  Let \(N_J := \cap_{i\in J}N_i\) and
  \(N_J^\circ := N_J \setminus \bigcup_{i \notin J} N_i\). By
  definition of fullness, the image of \(N_J^{\circ} \cap \Link(Y)\)
  via \(ev\) is the convex hull of $e_j$ with \(j \in J\), where
  \(\{e_i\}_{i \in I}\) is the standard basis of \(\RR^{|I|}\). In
  particular, it is a standard simplex of dimension \(|J|-1\), and it
  corresponds to the \((|J|-1)\)-cell of \(\mathcal{D}(Y)\) associated
  to $Y_J$. \qedhere

\end{proof}
We will see that if \((X,\omega)\) is a symplectic manifold then the
partition of unity can be chosen to make \(ev\) into a generically
Lagrangian coisotropic fibration.

\begin{Remark}[Relation with other work]\label{rmk:otherwork}
The evaluation map \(ev\) appears in the geometric P=W conjecture
{\cite[Conjecture 1.1]{KatzarkovNollPanditEtAl2015}}. Further, a
similar map, called \(\operatorname{Log}_{\mathcal{V}}\), is used to
define the topology of hybrid spaces in {\cite[Definition
2.3]{BoucksomJonsson2017}}. The key point that we address in this
paper is that \(ev\) can be {\em adapted} to a symplectic form
\(\omega\), meaning that it can be turned into a generically
Lagrangian fibration, unique up to homotopy.

\end{Remark}

\subsection{Evaluation map as a coisotropic fibration}

We now show that the evaluation map can be made into a coisotropic
fibration on the contact hypersurface \(\Link(X\setminus Y)\).

Let \(\nu_i\) be the normal bundle of \(Y_i\) in \(X\) and pick an
\(\omega\)-regularisation
\(\{\Psi_i\colon \OP{nbhd}_{\nu_i}(Y_i)\to X\}_{i\in I}\) of \(Y\) as
in Section \ref{sct:sympnbhd}; write
\(N_i=\OP{Image}(\Psi_i)\). Recall that \(\Link(X\setminus Y)\) can be
written as \(f^{-1}(\epsilon)\) for the \(Y\)-compatible function
\(f(x)=\sum_{i\in I}\log(\GG(\mu_i(x)))\) (where \(\GG\) is defined in
Equation \eqref{eq:cutoff2}, \(\mu_i\) is the moment map for the
circle action which rotates the fibres of the normal bundle to
\(Y_i\), and \(\epsilon>0\) is a small parameter).

\begin{Definition}\label{def:cutoff}
Let \(\FF\colon\RR\to[0,1]\) be a smooth cutoff function satisfying
\begin{equation}\label{eq:cutoff} \FF(x)=\begin{cases} 1 & \text{ if
}x \leq 0\\ 0 & \text{ if }x \geq \epsilon \end{cases}
\end{equation} and which is positive and strictly decreasing on
\([0,\epsilon)\).

\begin{center}
\begin{tikzpicture}[baseline=0cm]
\draw (2.3,0) -- (0,0) -- (0,2.3);
\node at (2.3,0) [right] {\(x\)};
\node at (0,2.3) [left] {\(\FF(x)\)};
\node at (2,0) [below] {\(\epsilon\)};
\draw (0,2) to[out=0,in=180] (2,0);
\end{tikzpicture}
\end{center}
\end{Definition}

\begin{Proposition}\label{prp:evalmap}
Assume that $Y$ has maximal intersection. Then, there exists a
partition of unity \(\{\chi_i\}_{i \in I}\) subordinate to the open
cover \(\{N_i\}_{i \in I}\), such that the evalution map \(ev:
\Link(X \setminus Y) \to \mathcal{D}(Y)\) is a generically
Lagrangian coisotropic fibration (compatible with the stratification
of \(\mathcal{D}(Y)\)).
\end{Proposition}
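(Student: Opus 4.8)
The plan is to build the partition of unity $\{\chi_i\}$ by first constructing it on a single chart around a maximal-intersection point and then propagating outward along the strata, using the circle actions supplied by the $\omega$-regularisation to guarantee coisotropy. Near a zero-dimensional stratum $Y_J$ with $|J|=n$, the $\omega$-regularisation gives a Darboux-type model $(\CC^*)^n$ (or a neighbourhood thereof) with commuting Hamiltonian circle actions whose moment maps are the $\mu_j$, $j\in J$. On $\Link(X\setminus Y)$, which is the level set $\{\sum_{i}\log(\GG\circ\mu_i)=\epsilon\}$, I would define $\chi_j$ for $j\in J$ to depend only on $\mu_j$ — say $\chi_j = \FF(\text{something in }\mu_j)/(\text{normalising sum})$ — so that each $\chi_j$ is invariant under \emph{all} the circle actions in the model. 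The fibres of $ev=(\chi_i)$ are then unions of torus orbits of the $T^{|J|}$-action, hence coisotropic: a level set of functions each invariant under a Hamiltonian torus action is coisotropic because its characteristic distribution is spanned by the Hamiltonian vector fields of those functions, which here are tangent to the torus orbit. Over the top stratum (interior of the top simplex) the fibre is a single $T^{|J|}$-orbit inside the contact hypersurface, hence an isotropic torus; a dimension count using \cref{dfn:coisotropicfibration} confirms it is Lagrangian for $d\alpha$.

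Next I would globalise. Away from the maximal-intersection points, over the open stratum $N_J^\circ$ one has a circle-bundle/torus-bundle structure $\pi_J\colon \Link(X\setminus Y)\cap N_J^\circ \to Y_J^\circ$ and, inductively, a partition of unity already defined on $Y_J^\circ$ rescaled appropriately; I would pull it back and interpolate with the model near the deeper strata using the cutoff $\FF$. The key compatibility one needs is that the $\chi_i$ remain invariant under the fibrewise rotation $\mu_i$-actions wherever $N_i$ is involved — this is exactly what makes the map coisotropic everywhere, not just near the $0$-strata — and that the combinatorics of supports reproduces fullness in the sense of \cref{independentpartition}, so that the image is $\mathcal{D}(Y)$ and $ev$ is stratified (the fibre over the interior of the $|J|-1$ cell lives in $N_J^\circ$). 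Fullness is arranged by choosing each $\chi_i$ as a function of $\mu_i$ alone before normalising, so the constraints among the $\chi_j$ on an overlap are only $\sum\chi_j=1$.

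The main obstacle is the patching over the locus where $1\le |J|<n$: here the naive ``$\chi_j$ is a function of $\mu_j$ only'' prescription from different maximal charts need not agree, and one must check that the interpolation (i) keeps each $\chi_i$ invariant under the $i$-th rotation action — which requires the $\omega$-regularisation compatibility conditions \cite[Definitions 2.11 and 2.12]{MTZ}, so that the various $\mu_i$ Poisson-commute on the overlaps — and (ii) keeps the fibres coisotropic of the right codimension, i.e. the rank of the characteristic distribution does not jump. Condition (i) is where the hypothesis of maximal intersection and the plumbing description from \cref{sct:sympnbhd} do the work: on $\Link(X\setminus Y)$ the contact form is the restriction of the Liouville form, the $\mu_i$ generate commuting Hamiltonian circle actions, and a joint level set of functions pulled back from such a moment map is automatically coisotropic. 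I would therefore organise the proof as: (1) local model at a $0$-stratum; (2) coisotropy via torus-invariance, with the Lagrangian/isotropic count over each stratum; (3) inductive extension over the strata by cutoff interpolation, checking fullness and invariance; (4) conclude that $ev$ has image $\mathcal{D}(Y)$ and is a generically Lagrangian coisotropic fibration compatible with the stratification.
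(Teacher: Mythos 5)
Your core mechanism is the right one, and it is the one the paper uses: build each $\chi_i$ out of $\FF\circ\mu_i$ and normalise, so that every $\chi_i$ is invariant under all the rotation actions (equivalently, the $\chi_i$ pairwise Poisson-commute and Poisson-commute with the function $\sum_{i\in I}\log(\GG\circ\mu_i)$ cutting out the link); coisotropy of the fibres then follows because the characteristic distribution is spanned by the Hamiltonian vector fields of these functions, the isotropic leaves are the fibres of $\pi_J\colon N_J^\circ\cap\Link(X\setminus Y)\to Y_J$ ($|J|$-dimensional tori), and maximal intersection is exactly what makes the fibres over the top-dimensional cells Lagrangian.

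However, the step you yourself flag as ``the main obstacle'' --- the stratum-by-stratum propagation and the interpolation between prescriptions coming from different maximal charts --- is left unresolved in your proposal, and it is also unnecessary. The paper simply sets $\FF_i:=\FF\circ\mu_i$ (extended by zero outside $N_i$, which is smooth because $\FF\equiv 0$ where $\mu_i\geq\epsilon$) and defines, once and for all on the whole plumbing neighbourhood, $\chi_i:=\FF_i/\sum_{j\in I}\FF_j$. There is no chart-dependence to reconcile: each $\mu_i$ is globally defined on $N_i$ by the $\omega$-regularisation, and the compatibility conditions {\cite[Definitions 2.11 and 2.12]{MTZ}} (which you correctly invoke) guarantee that the $\mu_i$ Poisson-commute on the overlaps, so this single formula already has all the invariance your argument needs; fullness in the sense of \cref{independentpartition} is immediate precisely because each $\chi_i$ is a function of $\mu_i$ alone before normalisation, as you observe at the end of your second paragraph. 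If you discard the inductive patching scheme and commit to this one global formula, the rest of your argument (coisotropy via Poisson commutation, the submersion of $N_J^\circ\cap\Link(X\setminus Y)$ onto the open $(|J|-1)$-cell, and the count showing the generic fibres are Lagrangian tori) goes through as written and coincides with the paper's proof.
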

\begin{proof}
  Let \(\FF_i:=\FF\circ\mu_i\) and choose the following partition of
  unity on \(N\) subordinate to \(\{N_i\}_{i\in I}\):
  \[ \chi_i(x) := \frac{\FF_i(x)}{\sum_{j \in I}\FF_j(x)}.\] Because
  the functions \(\mu_i\) generate a Hamiltonian torus action, their
  Poisson brackets vanish. Since the functions \(\chi_i\) depend only
  on the \(\mu_j\), they also Poisson commute with one another, and
  with \(\prod_{i\in I}\GG\circ\mu_i\), so the restrictions of these
  functions to the link define a coisotropic fibration over the dual
  complex. 
  
  Note further that \(\chi_i \equiv 0\) on 
  \(N^{\circ}_J:=N_J\setminus\bigcup_{k\not\in J}N_k\) if and only if \(i \notin J\). Hence, the evaluation map projects
  \(N^{\circ}_J\cap\Link(X\setminus Y)\) submersively onto the open
  \((|J|-1)\)-cell in \(\mathcal{D}(Y)\) corresponding to \(Y_J\). The
  fibre \(ev^{-1}(p)\) over a point \(p\) of this cell is coisotropic
  of codimension \(|J|-1\). The isotropic leaves of \(ev^{-1}(p)\) are
  precisely the fibres of the projection
  \(\pi_J\colon N^{\circ}_J\cap\Link(X\setminus Y)\to Y_J\), which are
  \(|J|\)-dimensional tori. In particular, \(ev\) is generically
  Lagrangian if \(Y\) has maximal intersection. \qedhere
\end{proof}

\begin{Remark}\label{rmk:rigidity}
  Note that the functions \(\{\mu_j\}_{j\in J}\) generate a
  Hamiltonian \(n\)-torus action on the regions \(N^{\circ}_J\) with
  \(|J|=n\). The Reeb flow on \(N^{\circ}_J\cap\Link(X\setminus Y)\)
  is an \(\RR\)-action inside that torus, and for a dense set of
  points in the corresponding \(n\)-cell of the dual complex, this
  \(\RR\)-action has dense image in the \(n\)-torus. In particular,
  Lemma \ref{lma:rigidity} applies to the evaluation map over these
  regions, and hence to any Lagrangian torus fibration which is a
  refinement of it.
\end{Remark}

\section{Analogue of non-archimedean SYZ fibration}
\label{sct:nonarch}

In Theorem \ref{thm:nonarchLagrafibra} we construct a smooth
Lagrangian torus fibration that can be regarded as a symplectic
analogue (or dual) of the non-archimedean SYZ fibration constructed in
{\cite[Theorem 6.1]{NicaiseXuYu}}. The base of this Lagrangian
fibration has the homotopy type of the smooth (or affinoid) locus of
the non-archimedean SYZ fibration. In particular, the integral affine
structures induced on their bases have \emph{dual} monodromy in the
sense of Proposition \ref{prop:monodromy}.

We first give a biased introduction to Berkovich spaces and recall the
notion of non-archimedean SYZ fibration. For further detail we refer
the interested reader to \cite{Berkovich1999}, \cite{T07},
\cite{Nicaise2016} and \cite{NicaiseXuYu}.

\subsection{A brief review of Berkovich spaces}

Let $X$ be a smooth connected variety over \(\CC\).

We denote by $X^{an}$ the {\em Berkovich analytification} of $X$. As a
set, \(X^{an}\) is the space of rank-one semi-valuations of the
fraction field $\CC(X)$ of $X$ that extend the trivial valuation of
$\CC$, i.e. the set of functions $v: \CC(X) \to \overline{\mathbb{R}}$
with the properties that:
\begin{enumerate}
\item $v(f \cdot g)=v(f) + v(g)$ for all $f, g \in \CC(X)$;
\item $v(f+g)\leq \min\{v(f), v(g)\}$ for all $f, g \in \CC(X)$;
\item $v(h)=0$ for all $h \in \CC^*$.
\end{enumerate} 

\begin{example}
  Let \(D\) be a divisor on a birational modification of \(X\). There
  is a semi-valuation which associates to any rational function on
  \(X\) its order of vanishing along \(D\). These {\em divisorial
    valuations} form an important class of semi-valuations.
\end{example}

%In order to define a topology on $X^{an}$, we define the {\em analytification morphism} $i: X^{an} \to X$ which sends any semi-valuation $v$ to its kernel, i.e. the schematic point defined by the ideal of local regular functions $g$ such that $v(g)=+ \infty$.

The Berkovich analytification is endowed with the coarsest topology
for which the following maps are continuous:
 \begin{itemize}
 \item the {\em analytification morphism} $i: X^{an} \to X$, which
   sends any semi-valuation $v$ to its kernel, i.e. the schematic
   point defined by the ideal of local regular functions $g$ such that
   $v(g)=+ \infty$. Note that here $X$ is endowed with the Zariski
   topology.
 \item the {\em norm maps} $\|f\|: X^{an} \to \overline{\mathbb{R}}$,
   given by $\|f\|(v)=\exp(-v(f))$, for any $f \in \CC(X)$.
 \end{itemize} 

%With this choice for the topology, the cone $\Cone(\mathcal{D}(Y))$ over the dual complex $\mathcal{D}(Y)$ embeds in $X^{an}$. 
 In order to illustrate some aspects of the topology of Berkovich
 spaces, we describe in detail a fundamental example: the
 analytification of an algebraic torus \((\CC^*)^n\).

 \begin{example}[Algebraic torus]\label{Ex: algebraic torus}
   Let
   \[T \coloneqq(\CC^*)^n \simeq \CC^* \otimes N \simeq
     \Spec(\CC[M])\] be an algebraic torus with character lattice $M$
   and cocharacter lattice $N = M ^{\vee}$ (i.e. the lattice of
   one-parameters
   subgroups). %In particular, $T$ is the complement of the union of the coordinate hyperplanes $\Delta$ in $\CC^n$.

%The real vector space $N_{\mathbb{R}}\coloneqq N \otimes \mathbb{R}\simeq \mathbb{R}^n$ is a space of valuations, and it embeds in $T^{an}$. Indeed, any element $n \in N_{\mathbb{R}}$
%defines the valuation
%\[
%v_n \bigg(\sum_{m \in M} a_m m\bigg)= \min\{ \langle n, m \rangle  :  m \in M \text{ such that }a_m \neq 0 \},
%\]
%where $\sum_{m \in M} a_m m$ is an element of the group algebra $\CC[M]$.
%If we identify $N_{\mathbb{R}}$ with the cone $\Cone(\mathcal{D}(\Delta))$ over the dual complex $\mathcal{D}(\Delta)$, the embedding $s: N_{\mathbb{R}} \to T^{an}$ gives an inclusion of $\Cone(\mathcal{D}(\Delta))$ in $T^{an}$.

%It has a retract called {\em tropicalization}, given by
   Define the {\em tropicalization} as the map 
   \begin{align*}
     \rho_T : \, T^{an} \to N_{\mathbb{R}} \coloneqq N\otimes \mathbb{R} \quad \quad v \mapsto (M \to \mathbb{R}: m \mapsto v(m)).
   \end{align*}
   It should be regarded as a non-archimedean analogue of the moment
   map for the standard Hamiltonian action of the real torus
   $(S^1)^n \subset T$
   \[ 
     \mu_T: T \to M_{\mathbb{R}} \qquad x=(x_1, \ldots, x_n) \mapsto \bigg(\frac{1}{2}|x_1|^2, \ldots, \frac{1}{2}|x_n|^2 \bigg).
   \]
   Notice first that among the semi-valuations in $T^{an}$ there are
   those which associate to any rational function $f$ the value
   $-\log|f(x)|$ for $x \in T$. Hence, there is a copy of $T$ which
   sits inside $T^{an}$; however, it is equipped with the discrete
   topology. The restriction of the tropicalization to $T$ yields the
   following map
   \[
     T \hookrightarrow T^{an} \xrightarrow{\rho_T} N_{\mathbb{R}}\qquad x=(x_1, \ldots, x_n) \mapsto (-\log |x_1|, \ldots, -\log |x_n|),
   \]
   which coincides with the moment map $\mu_T$ up to a diffeomorphism
   of the image space.
   
   The fibres of $\rho_T$ should be viewed as non-archimedean
   Lagrangian tori, although at the moment it is not clear what a
   rigorous notion of non-archimedean Lagrangian analytic space should
   be in general.
   
   A common feature of the maps $\rho_T$ and $\mu_T$ is that they both
   induce an integral affine structure on their bases. The affine
   structure induced by $\rho_T$ is constructed as follows: the norm
   maps $\|m\|$ of the invertible functions $m \in M$ are constant
   along the fibre of $\rho_T$, and so they descend to
   $N_{\mathbb{R}}$, and define a sheaf of affine functions; see also
   \cite[Section 4.1, Lemma 1]{KontsevichSoibelman2006}. Via the
   identification of the tangent spaces of the points of
   $N_{\mathbb{R}}$ with $N_{\mathbb{R}}$ itself, the flat integer
   lattice of this affine structure can be identified with the
   cocharacter lattice $N$; see Definition \ref{dfn:Zaff}.

   Classically, the base of the moment map $\mu_T$ carries an integral
   affine structure given by the dual of the lattice of the 1-forms
   $\int_{\gamma} \omega$, where $\omega$ is the standard symplectic
   form on $T$, and $\gamma$ is a loop of the fibre $\mu^{-1}(m)$,
   with $m \in M_{\mathbb{R}}$. According to the identification of the
   tangent spaces of the points of $M_{\mathbb{R}}$ with
   $M_{\mathbb{R}}$, the flat integer lattice of this affine structure
   can be identified with the character lattice $M$.

   The natural pairing
   $\langle \cdot , \cdot \rangle : N \times M \to \mathbb{Z}$ shows
   that the two affine structures are dual to each other.
\end{example}

\subsection{The non-archimedean SYZ fibration}

Just as the moment map $\mu_T$ is the local model for a Lagrangian
submersion by the Arnold-Liouville theorem, the tropicalization should
be taken as the local model of a non-archimedean Lagrangian
submersion, also known as affinoid torus fibration.

\begin{Definition}[Affinoid torus fibration] \cite[Paragraph
  3.3]{NicaiseXuYu} A morphism $f: \mathfrak{X} \to B$ from an
  analytic space $\mathfrak{X}$ to a topological space $B$ which is
  locally modelled on $\rho_T$ is called {\em affinoid torus
    fibration}, i.e. $B$ can be covered by open subsets $U$ such that
  there exists an open $V \subseteq N_{\mathbb{R}}$ and a commutative
  diagram

\begin{center}
\begin{tikzpicture}
\node (A) at (0,2) {\(f^{-1}(U)\)};
\node (B) at (3,2) {\(\rho_T^{-1}(V)\)};
\node (C) at (0,0) {\(U\)};
\node (D) at (3,0) {\(V\)};
\draw[->,thick] (A) -- (C) node [midway,left] {\(f\)};
\draw[->,thick] (B) -- (D) node [midway,right] {\(\rho_T\)};
\draw[->,thick] (A) -- (B);
\draw[->,thick] (C) -- (D);
\end{tikzpicture}
\end{center}

such that the upper horizontal map is an isomorphism of analytic
spaces and the lower horizontal map is a homeomorphism.
\end{Definition}

%As in the local model of Example, it is possible to define a sheaf of invertible functions on $\mathfrak{X}$. Their norm maps are constant along the fibre of $\rho_T$, and they descends to a sheaf of affine linear polynomial with coefficient in $\ZZ$ on $N_{\mathbb{R}}$.Any analytic space carries a sheaf of analytic functions. In particular, every invertible function on $\mathfrak{X}$ is constant on the fibre of $\rho_T$, and descends to a linear polynomial with coefficient in $\ZZ$ on $N_{\mathbb{R}}$. This induces an integral affine structure on the base of an affinoid torus fibration. By construction, in each chart $V \subset N_{\mathbb{R}}$, the integral affine structure 

Example \ref{Ex: algebraic torus} is the local model for an affinoid torus fibration, and many of the local constructions described there can
be globalised. A proof of these facts goes beyond the purposes of this
paper, so we will give only statements and refer the interested reader
to the literature.

Let \(Y=\bigcup_{i\in I}Y_i\) be a
simple normal crossing divisor of a  smooth complex projective variety $X$. 

\begin{Definition} The {\em centre} of a semi-valuation $v$ is the
  schematic point defined by the ideal of local regular functions $g$
  such that $v(g)>0$.  The {\em analytic generic fibre} of the formal
  completion of \(X\) along \(Y\), denoted by \(\mathfrak{X}_{\eta}\),
  is the subset of \(X^{an}\) of valuations that admit center on $Y$
  (see \cite[Paragraph 1.1.11]{T07}).
\end{Definition}

\begin{thm}{\em\cite[Theorem 3.26 and Corollary 3.27]{T07}}\label{thm: strong deformation}
  There exist strong deformation retracts
\begin{align*}
  \rho_{X, Y}:  X^{an} & \to \Cone(\mathcal{D}(Y))\\
  \rho_{\mathfrak{X}_{\eta}} \coloneqq \rho_{X, Y}|_{\mathfrak{X}_{\eta}} :  \mathfrak{X}_{\eta} & \to \Cone(\mathcal{D}(Y)) \setminus \{\mathrm{vertex}\}.
\end{align*}
\end{thm}

\begin{Assumption}\label{assumption:logCalabi-Yau1-dim}
  Suppose that \(Y=\bigcup_{i\in I}Y_i\) is a simple normal crossing
  divisor with maximal intersection and log Calabi-Yau along
  1-dimensional components \(Y_K\) (\(K \subset I\), \(|K|=n-1\)),
  i.e. each \(Y_K\) is a copy of \(\cp{1}\) which hits precisely two
  points \(Y_L\) (\(L \subset I\), \(|L|=n\)) in the 0-stratum of
  \(Y\).
 \end{Assumption}
 \begin{thm}{\em \cite[Theorem 6.1]{NicaiseXuYu}}\footnote{ In non-archimedean
     mirror symmetry it is customary to normalise the points in $X^{an}$ by the
     valuation of a local equation of $Y$. For this reason, in \cite{NicaiseXuYu}
     the base of $\rho_{X, Y}$ is the dual complex instead of its cone. However,
     we prefer the non-normalized version because it makes the relation with the
     Lagrangian torus fibration \(\Phi\) defined in Section \ref{sec:1in8} neater.}
   Suppose that Assumption \ref{assumption:logCalabi-Yau1-dim} holds
   and that $Z$ is the union of the strata of codimension \(\geq 2\)
   in \(\Cone (\mathcal{D}(Y))\). Then the retract
   $\rho_{\mathfrak{X}_{\eta}}$ is an affinoid torus fibration over
   $\Cone(\mathcal{D}(Y)) \setminus Z$.
 \end{thm}

 \begin{Remark}\label{rmk:non-archimedeanSYZ} If $Y$ is a singular
   fibre with maximal intersection of a minimal semistable
   degeneration of Calabi-Yau varieties, then $\rho_{X, Y}$ is the
   so-called {\em non-archimedean SYZ fibration}. The reason for this
   name is that conjecturally it should be possible to dualise these
   fibration on the affinoid locus, and construct a non-archimedean
   mirror. A GAGA principle for Berkovich spaces would give back the
   classical mirror, bypassing the difficult task of constructing a
   geometric special Lagrangian fibration in the first place. See
   \cite{KontsevichSoibelman2006}.
\end{Remark}

\begin{example}[Toric variety] \cite[Example
  3.5]{NicaiseXuYu} \label{Ex: toric variety} Let $X$ be a smooth
  toric variety with fan $\Sigma \subset N_{\mathbb{R}}$, and $Y$ its
  toric boundary. One can show that
  \begin{align*}
    \rho_{\mathfrak{X}_{\eta}}  :  \mathfrak{X}_{\eta} & \to \Cone(\mathcal{D}(Y)) \simeq \Sigma \subseteq N_{\mathbb{R}},\\
    v & \mapsto (M \to \mathbb{R}: m \mapsto v(m)).
  \end{align*}
  The base of $\rho_{\mathfrak{X}_{\eta}}$ is endowed with an integral
  affine structure, which can be identified with the lattice $N$ as in
  Example \ref{Ex: algebraic torus} (in this case the integral affine
  structure actually extends through the codimension-two locus $Z$).
\end{example}

\subsection{Non-archimedean monodromy}\label{sec:non-archmonodromy}

Let \(X\) be a smooth complex projective variety of complex dimension
\(n\), and \(Y=\bigcup_{i\in I}Y_i\subset X\) be a simple normal
crossing divisor satisfying Assumption
\ref{assumption:logCalabi-Yau1-dim}.

The charts of the affinoid torus fibration
$\rho_{\mathfrak{X}_{\eta}}$ in Theorem \ref{thm: strong deformation}
are endowed with the integral affine structures defined in Example
\ref{Ex: algebraic torus}, which glue to a global affine structure on
the whole $\Cone(\mathcal{D}(Y)) \setminus Z$; see \cite[Section
6]{NicaiseXuYu} for a detailed proof of the gluing. In this section,
we show that the monodromy of this integral affine structure can be
defined in purely topological terms.

Following \cite{NicaiseXuYu}, we present an atlas of $\Cone(\mathcal{D}(Y)) \setminus Z$, made of charts of the affinoid torus fibration of two different types.
\begin{enumerate}
\item An open set of first type  consists of a top dimensional open
cone \(\Cone(\sigma_L)\). The $(n-1)$-cell $\sigma_L \subseteq \mathcal{D}(Y)$ corresponds to the 0-dimensional component $Y_L$, for some $L \subseteq I$ with $|L|=n$. In particular, the pair $(X, Y)$ is locally modelled at $Y_L$ on the affine space $\CC^n$ with its coordinate hyperplanes $\Delta$. 

By construction (cf \cite[Section 2.4]{NicaiseXuYu}), the fibration $\rho_{\mathfrak{X}_{\eta}}$ over \(\Cone(\sigma_L)\) coincides with $\rho_{\CC^n, \Delta}$, described in Example \ref{Ex: toric variety}. In particular, the integral affine structure of the basis can be identified with the lattice of one-parameter subgroups of $\CC^n$, and we denote it by $N(L)$.
\item An open set of second type is the open star
\(\operatorname{Star}_K\) of the codimension-one open cells
\(\Cone(\sigma_K)\). The $(n-2)$-cell $\sigma_K \subseteq \mathcal{D}(Y)$ corresponds to the 1-dimensional component $Y_K$, for some $K \subseteq I$ with $|K|=n-1$. 

By \cite[Proposition 5.4]{NicaiseXuYu}, Assumption \ref{assumption:logCalabi-Yau1-dim} implies that there exists a (formal) toric tubular neighbourhood of $Y_K$. More precisely, the formal completion of \(X\) along \(Y_K\) is isomorphic to the formal
completion of a toric vector bundle \(\nu_K\) of
rank \(n-1\) on \(Y_K \simeq \cp{1}\) along its zero section.
 
Again, the fibration $\rho_{\mathfrak{X}_{\eta}}$ over \(\operatorname{Star}_K\) coincides with $\rho_{\nu_K, \Delta}$, where $\Delta$ is the toric boundary of $\nu_K$. In particular, the integral affine structure on the basis can be identified with the cocharacter lattice of \(\nu_K\), and we denote it by $N(K)$.
\end{enumerate}

Note that the inclusion $Y_L \subset Y_K$ induces the embedding $\Cone(\sigma_L) \subset \operatorname{Star}_K$, and an identification of the respective integral affine structures given by the linear map $\beta_{LK}: N(L) \to N(K)$.

Without loss of generality assume that $\Cone(\mathcal{D}(Y))
\setminus Z$ is connected, and fix $b_0$ a base point in $\operatorname{Star}_{K}$. For any loop $\gamma$  in $\Cone(\mathcal{D}(Y))
\setminus Z$ based at $b_0$, consider now an ordered finite sequence of open sets covering $\gamma$ %\[\operatorname{Star}_{K} = \operatorname{Star}_{K_0}\supset \Cone(\sigma_{L_1}) \subset  \operatorname{Star}_{K_2} \supset \ldots \subset \Cone(\sigma_{L_{n-1}})\supset \operatorname{Star}_{K_n}= \operatorname{Star}_{K}.\] 
\begin{center}
  \begin{tikzpicture}
  \node at (0.25,1) {\(\operatorname{Star}_{K} = \operatorname{Star}_{K_0}\)};
  \node at (1,0.5) {\rotatebox[origin=c]{135}{$\subset$}};
  \node at (1.75,0) {\(\Cone(\sigma_{L_1})\)};
  \node at (2.5,0.5) {\rotatebox[origin=c]{45}{$\subset$}};
  \node at (3.2,1) {\(\operatorname{Star}_{K_2}\)};
  \node at (3.75,0.5) {\rotatebox[origin=c]{135}{$\subset$}};
  \node at (4.25,0.5) {\(\ldots\)};
  \node at (4.75,0.5) {\rotatebox[origin=c]{45}{$\subset$}};
  \node at (5.65,1) {\(\operatorname{Star}_{K_{n-2}}\)};
  \node at (6,0.5) {\rotatebox[origin=c]{135}{$\subset$}};
  \node at (6.8,0){ \(\Cone(\sigma_{L_{n-1}}\))};
  \node at (7.5,0.5) {\rotatebox[origin=c]{45}{$\subset$}};
  \node at (8.5,1) {\(\operatorname{Star}_{K_n}= \operatorname{Star}_{K}\)};
  \end{tikzpicture}
  \end{center}

\begin{Definition}[Non-archimedean monodromy]\label{Non-archimedean monodromy}
The {\em monodromy representation} of the integral structure of the affinoid fibration $\rho_{\mathfrak{X}_\eta}$ is the group homomorphism 
\[\rho_{\text{non-arch}}:
\pi_1(\Cone(\mathcal{D}(Y))\setminus Z) \to \operatorname{GL}(N(K)),\]
given by \(\rho_{\text{non-arch}}([\gamma])= \beta_{L_{n-1}K}\circ \beta^{-1}_{L_{n-1}K_{n-2}} \ldots \circ \beta_{L_1 K_2} \circ \beta^{-1}_{L_1 K}\).
\end{Definition}

\begin{Remark}
In \cite[Proposition 5.4]{NicaiseXuYu}, the algebraic tubular
neighbourhood theorem for one-dimensional components $Y_K$ is proved under the assumption that the conormal
bundle of \(Y_K\) is ample. This condition can be always achieved by a
sequence of stratum blow-ups at \(Y_L \subset Y_K\) (or in the strict transform of \(Y_K\)) \[ X^m
\xrightarrow{\pi^m} X^{m-1}\to \ldots \to X^0=X. \] Note that stratum blow-ups do not alter the integral affine structure on
\(\Cone(\sigma_L)\). The positivity assumption can be actually removed
in the following way. The case of ample conormal bundle implies
that the formal completion \(\widehat{X}^m_{Y^m_K}\) of \(X^m\) along
the strict transform \(Y^m_K\) of \(Y_K\) is isomorphic to the formal
completion \(\widehat{\nu}^m_{K}\) of the toric vector bundle
\(\nu^m_{K}\) along its zero section. The isomorphism,
\(f^m\colon\widehat{\nu}^m_K\to \widehat{X}^m_{Y^m_K}\), is
constructed in \cite[Proposition 5.4]{NicaiseXuYu}. In particular, the
stratum contracted by \(\pi^m\) is the image under \(f^m\) of a
torus-invariant divisor in \(\widehat{\nu}^m_K\). The corresponding
toric blow-down is again the formal completion of a toric vector
bundle \(\nu^{m-1}_K\). Now, the morphism \(\pi^m \circ f^m\) factors
through a morphism \(f^{m-1}: \widehat{\nu}^{m-1}_K \to
\widehat{X}^{m-1}_{Y^{m-1}_K}\) by the universal property of blow-ups,
and we observe that \(f^{m-1}\) is an isomorphism, as required.
\end{Remark}

\subsection{Topological monodromy}\label{sec:1in8}

Let \((X, \omega)\) be a smooth complex projective variety of complex
dimension \(n\), and \(Y=\bigcup_{i\in I}Y_i\subset X\) a simple
normal crossing satisfying Assumption
\ref{assumption:logCalabi-Yau1-dim}. We are particularly interested in
the symplectic tubular neighbourhood of the complex 1-dimensional
stratum: our goal is to construct a Lagrangian torus fibration over
this neighbourhood, and compare its monodromy with the non-archimedean
monodromy (Definition \ref{Non-archimedean monodromy}).

To this end, first make a perturbation of the
symplectic form so that \(Y\) admits an \(\omega\)-regularisation by
\cite{MTZ}. For each \(K\subset I\) of size \(n-1\) and any \(L \subset I\) of size \(n\) containing \(K\) with \(Y_K\neq\emptyset\),
a \(\omega\)-regularisation gives us commutative squares
  \begin{center}
  \begin{tikzpicture}
  \node (B) at (3,2) {\(N_K\)};
  \node (A) at (0,2) {\(\OP{nbhd}_{\nu_K}(Y_K)\)};
  \node (D) at (3,0) {\(N_L\)};
  \node (C) at (0,0) {\(\OP{nbhd}_{\nu_L}(Y_L)\)};
  \draw[->,thick] (C) -- (A);
  \draw[->,thick] (D) -- (B);
  \draw[->,thick] (A) -- (B) node [midway,above] {$\Psi_{K}$};
  \draw[->,thick] (C) -- (D) node [midway,above] {$\Psi_{L}$};
  \end{tikzpicture}
  \end{center}
where:
\begin{itemize}
\item \(\nu_K\) is the normal bundle of \(Y_K\subset X\); this splits
  as a direct sum of toric line bundles, due to Assumption \ref{assumption:logCalabi-Yau1-dim}.
\item \(\nu_L\) is the tangent space at \(Y_L\).
\item \(\Psi_K\) and \(\Psi_L\) are symplectic embeddings of \(\OP{nbhd}_{\nu_K}(Y_K)\) and \(\OP{nbhd}_{\nu_L}(Y_L)\) respectively into \(X\), with \(N_K = \OP{Image}(\Psi_K)\) and \(N_L = \OP{Image}(\Psi_L)\).
\item The vertical arrows are the natural inclusions.
\end{itemize}
 
By shrinking the domain of the regularisation, we can assume that
$N_K$ is invariant with respect to the torus action of \(\nu_K\), and that the inclusion $\OP{nbhd}_{\nu_L}(Y_L) \hookrightarrow \OP{nbhd}_{\nu_K}(Y_K)$ is toric. Therefore, we obtain other commutative squares
  \begin{center}
  \begin{tikzpicture}
  \node (B) at (3,2) {\(B_K \subset M(K)_{\mathbb{R}}\)};
  \node (A) at (0,2) {\(\OP{nbhd}_{\nu_K}(Y_K)\)};
  \node (D) at (3,0) {\(B_L \subset M(L)_{\mathbb{R}}\)};
  \node (C) at (0,0) {\(\OP{nbhd}_{\nu_L}(Y_L)\)};
  \draw[->,thick] (C) -- (A);
  \draw[->,thick] (D) -- (B);
  \draw[->,thick] (A) -- (B) node [midway,above] {$\mu_{K}$};
  \draw[->,thick] (C) -- (D) node [midway,above] {$\mu_{L}$};
  \end{tikzpicture}
  \end{center}
where:
\begin{itemize}
\item \(M(K)= N(K)^{\vee}\) and \(M(L)= N(L)^{\vee}\)
are the character lattices of the torus acting on $\nu_K$ and on $\nu_L$ respectively.
\item $\mu_K$ and $\mu_L$ are toric moment maps.
\end{itemize}
  
Let \(\mathbf{B}\) be the colimit of the diagram of spaces whose
vertices are the bases \(B_K\) with \(|K|\in\{n-1,n\}\) and whose
morphisms are the inclusions \(B_L\to B_K\); that is, \(\mathbf{B}\)
is the quotient of
\( \bigsqcup_{K : |K|=n-1} B_K \sqcup \bigsqcup_{L : |L|=n} B_L \) by
the equivalence relation which identifies each \(B_L\) with its image
under the inclusion map $B_L \subset B_K$. Note that \(\mathbf{B}\) is
a strong deformation retract of $\Cone(\mathcal{D}(Y)) \setminus Z$.

Let $\mathcal{N} \subset X$ be the union of $N_K$ for all $K \subset I$ of size $n-1$ with $Y_K \neq \emptyset$.
\begin{Proposition}
There exists a Lagrangian torus fibration \[\phi: \mathcal{N} \to \mathbf{B}.\]
with only toric singularities, along the boundary of \(\mathbf{B}\).
\end{Proposition}  
\begin{proof}
Set $\phi(x)=\mu_K \circ \Psi_K^{-1}(x)$ for all $x \in N_K$. The commutative squares above ensure that the maps $\mu_K \circ \Psi_K^{-1}$ agree on plumbing regions, and so that $\phi$ is well-defined. Moreover, $\phi$ is a Lagrangian fibration with only toric singularity along the boundary of \(\mathbf{B}\), as $\mu_K$ are so.
\end{proof}

By the Arnold-Liouville theorem, \(\mathbf{B}^{\circ}\) inherits an integral
affine structure, and up to fibred symplectomorphisms, the Lagrangian
torus fibration $\phi$ can be locally identified with the fibration
\[ T^{\vee}\mathbf{B^{\circ}}/\Lambda^{\vee} \to \mathbf{B}^{\circ},
\]
where $\Lambda$ is the integral lattice of tangent vectors defining
the integral affine structure on $\mathbf{B}^{\circ}$; see Definition
\ref{dfn:Zaff}(3). The integral affine structure on \(\mathbf{B}^{\circ}\) actually extends to \(\mathbf{B}\), since $\phi$ has only toric singularities.

We study now the monodromy of the integral affine structure on
\(\mathbf{B}\). The key observation is that the toric inclusions
$N_L \to N_K$ are induced by the same linear maps
$\beta_{LK}: N(L) \to N(K)$, defined in Section
\ref{sec:non-archmonodromy}, or dually by
\((\beta_{LK}^{-1})^{t}: M(L) \to M(K).\)
 
Fix a point $b_0 \in B_K$ of $\mathbf{B}$. As above, the integral
affine structure of $\mathbf{B}$ at $b_0$ can be identified with the
character lattice $M(K)$. Therefore, the monodromy of $\phi$
 \[\rho_{\text{Lagr}}:
   \pi_1(\mathbf{B}) \simeq \pi_1(\Cone(\mathcal{D}(Y))\setminus Z)
   \to \operatorname{GL}(M(K)),\] is given
 by
 \[\rho_{\text{Lagr}}([\gamma])= (\beta_{L_{n-1}K}^{-1})^t \circ
   (\beta_{L_{n-1}K_{n-2}})^t \ldots \circ (\beta_{L_1 K_2}^{-1})^t
   \circ (\beta_{L_1 K})^t \] for any loop
 $\gamma \in \pi_1(\mathbf{B}) \simeq
 \pi_1(\Cone(\mathcal{D}(Y))\setminus Z)$. In particular, we have that
 \[\rho_{\text{Lagr}}([\gamma])= (\rho_{\text{Lagr}}([\gamma])^{-1})^t.\]
 
 \begin{Proposition}\label{prop:monodromy}
   The natural pairing
   $\langle \cdot , \cdot \rangle : N(K) \times M(K) \to \mathbb{Z}$
   induces the identification
   \(\rho_{\text{non-arch}}=(\rho_{\text{Lagr}}^{-1})^{t}\).
 \end{Proposition}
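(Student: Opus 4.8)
The plan is to deduce the identity from a purely formal manipulation of the two explicit monodromy formulas, feeding in a single geometric fact that has already been established in the discussion preceding the proposition: the symplectic toric tubular-neighbourhood inclusions $N_L\hookrightarrow N_K$ (after shrinking so that each $N_K$ is torus-invariant and each inclusion is toric) are governed by exactly the same lattice maps $\beta_{LK}\colon N(L)\to N(K)$ that appear on the non-archimedean side, equivalently, on character lattices, by the transpose-inverses $(\beta_{LK}^{-1})^{t}\colon M(L)\to M(K)$. Granting this, the two affine structures are built from transpose-dual cocycle data, and the comparison of monodromies becomes linear algebra.

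First I would fix a loop $\gamma$ in $\Cone(\mathcal{D}(Y))\setminus Z$ and the ordered chain of charts $\OP{Star}_K=\OP{Star}_{K_0}\supset\Cone(\sigma_{L_1})\subset\OP{Star}_{K_2}\supset\cdots\subset\OP{Star}_{K_n}=\OP{Star}_K$ covering it, exactly as in \cref{Non-archimedean monodromy}. Along this chain the non-archimedean affine structure is transported by $\rho_{\text{non-arch}}([\gamma])=\beta_{L_{n-1}K}\circ\beta_{L_{n-1}K_{n-2}}^{-1}\circ\cdots\circ\beta_{L_1K_2}\circ\beta_{L_1K}^{-1}$ acting on $N(K)$, while the Arnold--Liouville affine structure on $\mathbf{B}$ is transported along the same chain by the corresponding character-lattice maps, namely $\rho_{\text{Lagr}}([\gamma])=(\beta_{L_{n-1}K}^{-1})^{t}\circ(\beta_{L_{n-1}K_{n-2}})^{t}\circ\cdots\circ(\beta_{L_1K_2}^{-1})^{t}\circ(\beta_{L_1K})^{t}$ acting on $M(K)$. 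Here I would be careful to check that each inclusion $\Cone(\sigma_L)\subset\OP{Star}_K$ (i.e. $B_L\subset B_K$) produces the map $\beta_{LK}$ and not its inverse, on both sides simultaneously; this is the only place where one could accidentally introduce a spurious inversion.

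Next I would use the pairing $\langle\cdot,\cdot\rangle\colon N(K)\times M(K)\to\mathbb{Z}$ to identify $M(K)$ with $N(K)^{\vee}=\OP{Hom}(N(K),\mathbb{Z})$; under this identification, for any automorphism $A$ of $N(K)$ the dual automorphism of $M(K)$ is precisely $A^{t}$, and $(A^{-1})^{t}=(A^{t})^{-1}$. The computation is then elementary: writing $A=\rho_{\text{non-arch}}([\gamma])$, the inverse $A^{-1}$ reverses the order of the factors and inverts each one, and transposing reverses the order once more and transposes each factor, so that $(A^{-1})^{t}=(\beta_{L_{n-1}K}^{-1})^{t}\circ(\beta_{L_{n-1}K_{n-2}})^{t}\circ\cdots\circ(\beta_{L_1K_2}^{-1})^{t}\circ(\beta_{L_1K})^{t}$, which is exactly $\rho_{\text{Lagr}}([\gamma])$. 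Hence $\rho_{\text{Lagr}}=(\rho_{\text{non-arch}}^{-1})^{t}$, equivalently $\rho_{\text{non-arch}}=(\rho_{\text{Lagr}}^{-1})^{t}$, as claimed.

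The hard part is not the algebra but justifying that both monodromies really do factor through the same $\beta_{LK}$: one must verify that after shrinking the $\omega$-regularisation so that each $N_K$ is invariant under the torus action on $\nu_K$ and the inclusion $\OP{nbhd}_{\nu_L}(Y_L)\hookrightarrow\OP{nbhd}_{\nu_K}(Y_K)$ is toric, the induced inclusion of moment polytopes $B_L\subset B_K$ is the affine-linear map dual to $\beta_{LK}$, with the same transition data that \cite{NicaiseXuYu} use to glue the non-archimedean charts of $\Cone(\mathcal{D}(Y))\setminus Z$. This is exactly the point where \cref{assumption:logCalabi-Yau1-dim} is used (forcing $\nu_K$ to split as a sum of toric line bundles over $\cp1$) together with the compatibility of the $\omega$-regularisations along the normal-crossing locus; once this identification of transition data is in hand, the proposition follows formally as above.
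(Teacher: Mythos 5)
Your proposal is correct and follows essentially the same route as the paper: the paper's (implicit) proof consists precisely of the key observation that the toric inclusions $N_L\hookrightarrow N_K$ are governed by the same lattice maps $\beta_{LK}$ as the non-archimedean charts, followed by comparing the two explicit composite formulas for $\rho_{\text{non-arch}}([\gamma])$ and $\rho_{\text{Lagr}}([\gamma])$ via transpose-inverse. Your linear-algebra manipulation and your identification of where \cref{assumption:logCalabi-Yau1-dim} and the $\omega$-regularisation enter match the paper's argument.
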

  
\begin{example}
  Let $X \subset \mathbb{P}^3$ be a smooth cubic surface and
  $Y = \bigcup^3_{i=1}Y_i$ be a hyperplane section which consists of
  three lines pairwise intersecting in three double points. The normal
  bundle of $Y_i$ in $X$ is isomorphic to the line bundle
  $\mathcal{O}_{\mathbb{P}^1}(-1)$. In this case, $B_L$ are squares,
  and $B_{12}$, $B_{23}$, $B_{13}$ are right
  trapezia. % whose oblique leg has slope one in the lattice $M(12)$, $M(23)$ and $M(13)$ respectively.
  In the picture, we draw also the colimit \(\mathbf{B}\), up to an
  identification which consists of an affine transformation whose
  linear part is $- \mathrm{id}$. By definition, this linear map is
  the monodromy of the integral affine structure of $\mathbf{B}$ along
  a generator of its fundamental group
  $\pi_1(\mathbf{B}) \simeq \pi_1(\mathcal{D}(Y)) \simeq
  \pi_1(S^1)\simeq \mathbb{Z}$.  \vspace{0.2 cm}
  \begin{center}
  \begin{tikzpicture}[scale=0.70]
  \draw (0,1.5) --(1, 1.5);
  \draw (1, 1.5) --(1, 5.5);
  \draw (0, 4.5) --(1, 5.5);
  \draw (0, 1.5) --(0, 4.5);
  \draw (1,1) -- (5,1);
  \draw (2,0) -- (5,0);
  \draw (1,1) -- (2,0);
  \draw (5,1) -- (5,0);
  \draw (5,1.5) -- (6, 2.5);
  \draw (5, 1.5) -- (5, 5.5);
  \draw (5, 5.5) -- (6, 5.5);
  \draw (6,2.5) -- (6, 5.5);
  \draw (10, 0) --(10, 3) -- (11, 4) -- (11, 1) -- (13, 1) -- (16, 4) -- (16, 3)--(13, 0) --(10, 0);
  \draw[thick, fill=blue!15] (10,4.5)--(10, 5.5)--(11, 5.5)--(11, 4.5)--(10, 4.5);
  \draw[thick, fill=blue!15] (15,4.5)--(15, 5.5)--(16, 5.5)--(16, 4.5)--(15, 4.5);
   \draw[thick, fill=blue!15] (10,2)--(11,3)--(11,4)--(10,3)--(10,2);   
    \draw[thick, fill=blue!15] (15,3)--(15,2)--(16,3)--(16,4)--(15,3);
     \draw[fill=blue!15] (0,4.5)--(1,5.5)--(1,4.5)--(0,3.5);
        \draw[fill=blue!15] 
        (5,5.5)--(6,5.5)--(6,4.5)--(5,4.5)--(5,5.5);
        \draw[fill=red!15] (0,1.5)--(1,1.5)--(1,2.5)--(0, 2.5)--(0,1.5);
        \draw[fill=red!15] (1,1)--(2,1)--(3,0)--(2,0)--(1,1);
        \draw[fill=red!15] (10,0)--(11,0)--(11,1)--(10,1)--(10,0);
        \draw[fill=green!15] (4,0)--(5,0)--(5,1)--(4,1)--(4,0);
        \draw[fill=green!15] (5, 1.5)--(6, 2.5)--(6,3.5)--(5, 2.5)--(5, 1.5);
        \draw[fill=green!15] (12, 0)--(13,0)--(14,1)--(13,1)--(12,0);
    \draw[thick, ->] (10.5, 4.25)--(10.5, 3.75);
    \draw[thick, ->] (15.5, 4.25)--(15.5, 3.75);
    \draw[thick, ->] (14.75, 5)--(11.25, 5);
    \node (A) at (13, 5.25) {\(\rho_{\mathrm{Lagr}}=-\mathrm{id}\)};
  \draw[thick, decoration={markings, mark=at position 0.65 with {\arrow{>>}}},postaction={decorate}] (10,4.5)--(10, 5.5);
  \draw[thick, decoration={markings, mark=at position 0.65 with {\arrow{>>}}},postaction={decorate}] (10,2)--(10, 3);
  \draw[thick, decoration={markings, mark=at position 0.65 with {\arrow{>}}},postaction={decorate}] (10,4.5)--(11, 4.5);
  \draw[thick, decoration={markings, mark=at position 0.65 with {\arrow{>}}},postaction={decorate}] (10,2)--(11, 3);
    \draw[thick, decoration={markings, mark=at position 0.65 with {\arrow{>}}},postaction={decorate}] (16, 5.5) --(15, 5.5);
    \draw[thick, decoration={markings, mark=at position 0.65 with {\arrow{>>}}},postaction={decorate}] (16,3)--(15, 2);
    \draw[thick, decoration={markings, mark=at position 0.65 with {\arrow{>>}}},postaction={decorate}] (16,5.5)--(16, 4.5);
    \draw[thick, decoration={markings, mark=at position 0.65 with {\arrow{>}}},postaction={decorate}] (16,3)--(16, 4);
  \node[black] at (-1.25,2.5) [above right] {\(B_{12}\)};
  \node[black] at (7.25,3.5) [above left] {\(B_{13}\)};
  \node[black] at (3.5, -0.75) {\(B_{23}\)};
   \node[black] at (13, -0.75) {\(\mathbf{B}\)};
   \node[black] at (9.25,5) {\(B_{123}\)};
  \end{tikzpicture}
  \end{center}  
\end{example}
 
 To conclude, we summarize the results of this section in the following theorem.
 
  \begin{thm}\label{thm:nonarchLagrafibra}
  Suppose that \((X, Y)\) is a simple normal crossing pair of maximal
  intersection and log Calabi-Yau along the irreducible components of the 1-dimensional stratum of
  \(Y\). Let \(\mathfrak{X}_{\eta}\) be the {\em analytic generic fibre} of the formal completion of \(X\) along \(Y\), and \(Z\) be the union of the cells of codimension \(\geq 2\)
  in \(\Cone(\mathcal{D}(Y))\). 
  
  There exists an affinoid torus fibration 
  \[\rho_{\mathfrak{X}_{\eta}}: \mathfrak{X}_{\eta} \to \Cone(\mathcal{D}(Y))\setminus Z, \]
  a neighbourhood $\mathcal{N}$ of the 1-dimensional stratum of $Y$ and a Lagrangian torus fibration 
  \[
  \phi: \mathcal{N} \to \mathbf{B} \subset \Cone(\mathcal{D}(Y))\setminus Z,
  \]
  where $\mathbf{B}$ is a retract of
  $\Cone(\mathcal{D}(Y))\setminus Z$, such that the monodromy of the
  integral affine structure induced by $\rho_{\mathfrak{X}_{\eta}}$
  and $\phi$ are dual in the sense of Proposition
  \ref{prop:monodromy}.
  \end{thm}

\appendix
\section{Symplectic plumbing neighbourhoods}
\label{app}

\begin{lem}\label{lma:app1}
  Suppose you have two symplectic normal crossing divisors
  \(Y\subset X\) and \(Y'\subset X'\), each with pairwise
  symplectically orthogonal components, each indexed by the set
  \(I\). Suppose that there is a homeomorphism \(f\colon Y\to Y'\)
  which restricts to a symplectomorphism \(f_i\colon Y_i\to Y'_i\) on
  each component. Suppose moreover that there are symplectic bundle
  isomorphisms \(F_i\colon\nu_XY_i\to\nu_{X'}Y'_i\) with the property
  that \(F_i|_{Y_{ij}}\) agrees with
  \(df_j\colon\nu_{Y_j}Y_{ij}\to\nu_{Y'_j}Y'_{ij}\). Then there are
  symplectomorphic neighbourhoods \(\OP{nbhd}_X(Y)\) and
  \(\OP{nbhd}_{X'}(Y')\).
\end{lem}
\begin{proof}
  By {\cite[Proposition 4.2]{MTZ}} we can construct smooth
  regularisations \(\psi_J\) of \(Y\) and \(\psi'_J\) of \(Y'\) with the
  property that the normal bundle of each stratum \(Y_J\),
  \(J\subset I\), is identified with the symplectic orthogonal
  complement \(TY_J^{\omega}\) (both in \(X\) and within all
  intermediate strata). These regularisations are compatible in the
  sense that if \(K\subset J\) then
  \(\psi_J=\psi_K\circ d_{\nu}\psi_{K;J}\) where \(d_{\nu}\psi_{K;J}\)
  denotes the linearisation of \(\psi_K\) in the normal directions to
  \(Y_J\) (see {\cite[Definition 4]{MTZSurvey}}).

  In particular, this gives regularisations of each stratum \(Y_J\)
  within each \(Y_i\) with \(i\in J\). Now, stratum by stratum
  starting at the deepest, we modify \(f_i\) by an isotopy so that the following diagrams commute:

  \begin{center}
    \begin{tikzpicture}
      \node (A) at (0,0) {\(Y_i\)};
      \node (B) at (2,0) {\(Y'_i\)};
      \node (C) at (0,-2) {\(\nu_{Y_i}Y_J\)};
      \node (D) at (2,-2) {\(\nu_{Y'_i}Y_J\)};
      \draw[->,thick] (A) -- node [midway,above] {\(f_i\)} (B);
      \draw[->,thick] (C) -- node [midway,below] {\(d_{\nu}f_i\)} (D);
      \draw[->,thick] (C) -- (A);
      \draw[->,thick] (D) -- (B);
    \end{tikzpicture}
  \end{center}
  where the vertical maps are the regularisations, and \(d_{\nu}\)
  denotes the linearisation of \(f_i\) along the normal directions to
  \(Y_J\) in \(Y_i\). The idea here is that, in local coordinates, a
  diffeomorphism can be made to coincide locally with its derivative
  by an isotopy (by isotoping its graph to coincide with the graph of
  its linearisation).

  We now use our regularisations and the symplectic bundle
  isomorphisms \(F_i\) to construct a diffeomorphism from a
  neighbourhood of \(Y\) to a neighbourhood of \(Y'\) which induces an
  isomorphism of symplectic vector bundles \(TX|_{Y}\) to
  \(TX'|_{Y'}\). The idea is to use
  \(\Psi_i:=\psi'_i\circ F_i\circ \psi_i^{-1}\) on the codomain of the
  regularisation \(\psi_i\). For this to make sense, we need
  \(\Psi_i=\Psi_j\) on all intersections. For the sake of notational
  simplicity, we illustrate this only in the case when
  \(Y=Y_1\cup Y_2\).

  \begin{center}
    \begin{tikzpicture}
      \filldraw[red,opacity=0.3] (-0.5,-1) -- (-0.5,1) -- (0.5,1) -- (0.5,-1) -- cycle;
      \filldraw[blue,opacity=0.3] (-1,-0.5) -- (1,-0.5) -- (1,0.5) -- (-1,0.5) -- cycle;
      \filldraw[purple,opacity=0.3] (0.5,0.5) -- (-0.5,0.5) -- (-0.5,-0.5) -- (0.5,-0.5);
      \draw (-0.5,1) -- (-0.5,0.5) -- (-1,0.5);
      \draw (-1,-0.5) -- (-0.5,-0.5) -- (-0.5,-1);
      \draw (0.5,-1) -- (0.5,-0.5) -- (1,-0.5);
      \draw (1,0.5) -- (0.5,0.5) -- (0.5,1);
      \draw (-1,0) -- (1,0);
      \draw (0,-1) -- (0,1);
      \filldraw[red,opacity=0.3] (2,1) -- (2,3) -- (3,3) -- (3,1) -- cycle;
      \filldraw[blue,opacity=0.3] (1.5,-1) -- (3.5,-1) -- (3.5,-2) -- (1.5,-2) -- cycle;
      \filldraw[purple,opacity=0.3] (2,0.5) -- (3,0.5) -- (3,-0.5) -- (2,-0.5);
      \draw (2.5,1) -- (2.5,3);
      \draw (1.5,-1.5) -- (3.5,-1.5);
      \draw (2,0) -- (3,0);
      \draw (2.5,-0.5) -- (2.5,0.5);
      \filldraw[red,opacity=0.3] (8,-1) -- (8,1) -- (9,1) -- (9,-1) -- cycle;
      \filldraw[blue,opacity=0.3] (7.5,-0.5) -- (9.5,-0.5) -- (9.5,0.5) -- (7.5,0.5) -- cycle;
      \filldraw[purple,opacity=0.3] (9,0.5) -- (8,0.5) -- (8,-0.5) -- (9,-0.5);
      \draw (8,1) -- (8,0.5) -- (7.5,0.5);
      \draw (7.5,-0.5) -- (8,-0.5) -- (8,-1);
      \draw (9,-1) -- (9,-0.5) -- (9.5,-0.5);
      \draw (9.5,0.5) -- (9,0.5) -- (9,1);
      \draw (7.5,0) -- (9.5,0);
      \draw (8.5,-1) -- (8.5,1);
      \filldraw[red,opacity=0.3] (5.5,1) -- (5.5,3) -- (6.5,3) -- (6.5,1) -- cycle;
      \filldraw[blue,opacity=0.3] (5,-1) -- (7,-1) -- (7,-2) -- (5,-2) -- cycle;
      \filldraw[purple,opacity=0.3] (5.5,0.5) -- (6.5,0.5) -- (6.5,-0.5) -- (5.5,-0.5);
      \draw (6,1) -- (6,3);
      \draw (5,-1.5) -- (7,-1.5);
      \draw (5.5,0) -- (6.5,0);
      \draw (6,-0.5) -- (6,0.5);
      \draw[->,thick] (2.25,2) -- node [midway,above] {\(\psi_1\)} (0.2,0.8);
      \draw[->,thick] (2.25,0.25) -- node [midway,above] {\(\psi_{12}\)} (0.25,0.25);
      \draw[->,thick] (2.0,-1.3) -- node [midway,below] {\(\psi_{2}\)} (0.8,-0.25);
      \draw[->,thick] (2.75,-0.25) -- node [midway,right] {\(d_\nu\psi_{2,12}\)} (2.75,-1.3);
      \draw[->,thick] (2.75,0.25) -- node [midway,right] {\(d_\nu\psi_{1,12}\)} (2.75,1.3);
      \draw[->,thick] (6.25,2) -- node [midway,above] {\(\psi'_1\)} (8.2,0.8);
      \draw[->,thick] (6.25,0.25) -- node [midway,above] {\(\psi'_{12}\)} (8.25,0.25);
      \draw[->,thick] (6.5,-1.3) -- node [midway,below] {\(\psi'_{2}\)} (7.8,-0.25);
      \draw[->,thick] (5.75,-0.25) -- node [midway,left] {\(d_\nu\psi'_{2,12}\)} (5.75,-1.3);
      \draw[->,thick] (5.75,0.25) -- node [midway,left] {\(d_\nu\psi'_{1,12}\)} (5.75,1.3);
      \draw[->,thick] (2.75,2) -- node [midway,above] {\(F_1\)} (5.75,2);
      \draw[->,thick] (2.75,0.1) -- node [midway,below] {\(F_{12}\)} (5.75,0.1);
      \draw[->,thick] (2.75,-1.8) -- node [midway,below] {\(F_2\)} (5.75,-1.8);
    \end{tikzpicture}
  \end{center}

  Here, \(F_{12}\) is the restriction of \(F_1\) to \(\nu_{Y_2}Y_{12}\)
  plus the restriction of \(F_2\) to \(\nu_{Y_1}Y_{12}\). Because of the
  way we modified our homeomorphism \(f\), all parts of the diagram
  commute, so we see that \(\Psi_i=\Psi_j\).
  
  Now write \(\omega\) for the symplectic form on \(X\) and
  \(\omega'\) for the pullback of the symplectic form on \(X'\) along
  this diffeomorphism. Since \(\omega\) and \(\omega'\) are symplectic
  and agree along \(Y\), the 2-forms
  \(\omega_t=\omega+t(\omega'-\omega)\) give an isotopy of symplectic
  forms in a (smaller) neighbourhood of \(Y\). By Moser's isotopy
  theorem, this means that \(\omega\) and \(\omega'\) are
  symplectomorphic when restricted to this smaller neighbourhood.
\end{proof}

\begin{Remark}\label{rmk:plumbingapp}
  Another way of thinking about this is that the neighbourhood of
  \(Y\) is obtained from the normal bundles \(\nu_XY_i\) by plumbing
  along subbundles \(\nu_XY_{I}\). The different possible
  neighbourhoods can be obtained by twisting the plumbing
  identification using symplectic gauge transformations of these
  subbundles preserving the stratification by subbundles
  \(\nu_{Y_J}Y_I\). This is analogous to plumbing 2-dimensional
  surfaces along square patches, where one has the freedom to twist
  neither, either or both of the square patches, corresponding to the
  \(\ZZ/2\times\ZZ/2\) of gauge transformations of \(\RR^2\)
  preserving the \(x\)- and \(y\)-axes.

  \begin{center}
    \begin{tikzpicture}
      \filldraw[draw=none,fill=gray,opacity=0.5] (-1,0.3) -- (1,0.3) -- (1,-0.3) -- (-1,-0.3);
      \filldraw[draw=none,fill=gray,opacity=0.5] (0.3,-1) -- (0.3,1) -- (-0.3,1) -- (-0.3,-1);      
      \draw (-1,0.3) -- (1,0.3);
      \draw (-1,-0.3) -- (1,-0.3);
      \draw (-0.3,-1) -- (-0.3,1);
      \draw (0.3,-1) -- (0.3,1);
      \node at (-0.15,0.15) {A};
      \node at (0.15,0.15) {B};
      \node at (-0.15,-0.15) {C};
      \node at (0.15,-0.15) {D};
      \draw[red] (-1,0) -- (1,0);
      \draw[blue] (0,-1) -- (0,1);
      \begin{scope}[shift={(3,0)}]
        \filldraw[draw=none,fill=gray,opacity=0.5] (-1,0.3) -- (1,0.3) -- (1,-0.3) -- (-1,-0.3);
        \filldraw[draw=none,fill=gray,opacity=0.5] (-0.3,-1) -- (0.3,-0.3) -- (0.3,0.3) -- (-0.3,1) -- (0.3,1) -- (-0.3,0.3) -- (-0.3,-0.3) -- (0.3,-1);
        \draw (-1,0.3) -- (1,0.3);
        \draw (-1,-0.3) -- (1,-0.3);
        \draw (-0.3,-1) -- (0.3,-0.3) -- (0.3,0.3) -- (-0.3,1);
        \draw (0.3,-1) -- (0.04,-0.68);
        \draw (-0.04,-0.62) -- (-0.3,-0.3) -- (-0.3,0.3) -- (-0.04,0.62);
        \draw (0.04,0.68) -- (0.3,1);
        \node at (0.15,0.15) {\reflectbox{A}};
        \node at (-0.15,0.15) {\reflectbox{B}};
        \node at (0.15,-0.15) {\reflectbox{C}};
        \node at (-0.15,-0.15) {\reflectbox{D}};
        \draw[red] (-1,0) -- (1,0);
        \draw[blue] (0,-1) -- (0,1);
      \end{scope}
      \begin{scope}[shift={(0,-3)}]
        \filldraw[draw=none,fill=gray,opacity=0.5] (0.3,-1) -- (0.3,1) -- (-0.3,1) -- (-0.3,-1);
        \filldraw[draw=none,fill=gray,opacity=0.5] (-1,-0.3) -- (-0.3,0.3) -- (0.3,0.3) -- (1,-0.3) -- (1,0.3) -- (0.3,-0.3) -- (-0.3,-0.3) -- (-1,0.3);
        \draw (0.3,-1) -- (0.3,1);
        \draw (-0.3,-1) -- (-0.3,1);
        \draw (-1,-0.3) -- (-0.3,0.3) -- (0.3,0.3) -- (1,-0.3);
        \draw (-1,0.3) -- (-0.68,0.04);
        \draw (-0.62,-0.04) -- (-0.3,-0.3) -- (0.3,-0.3) -- (0.62,-0.04);
        \draw (0.68,0.04) -- (1,0.3);
        \node at (-0.15,-0.15) {\scalebox{1}[-1]{A}};
        \node at (0.15,-0.15) {\scalebox{1}[-1]{B}};
        \node at (-0.15,0.15) {\scalebox{1}[-1]{C}};
        \node at (0.15,0.15) {\scalebox{1}[-1]{D}};
        \draw[red] (-1,0) -- (1,0);
        \draw[blue] (0,-1) -- (0,1);
      \end{scope}
      \begin{scope}[shift={(3,-3)}]
        \filldraw[draw=none,fill=gray,opacity=0.5] (-0.3,-1) -- (0.3,-0.3) -- (0.3,0.3) -- (-0.3,1) -- (0.3,1) -- (-0.3,0.3) -- (-0.3,-0.3) -- (0.3,-1);
        \filldraw[draw=none,fill=gray,opacity=0.5] (-1,-0.3) -- (-0.3,0.3) -- (0.3,0.3) -- (1,-0.3) -- (1,0.3) -- (0.3,-0.3) -- (-0.3,-0.3) -- (-1,0.3);
        \draw (-1,-0.3) -- (-0.3,0.3) -- (0.3,0.3) -- (1,-0.3);
        \draw (-1,0.3) -- (-0.68,0.04);
        \draw (-0.62,-0.04) -- (-0.3,-0.3) -- (0.3,-0.3) -- (0.62,-0.04);
        \draw (0.68,0.04) -- (1,0.3);
        \draw (-0.3,-1) -- (0.3,-0.3) -- (0.3,0.3) -- (-0.3,1);
        \draw (0.3,-1) -- (0.04,-0.68);
        \draw (-0.04,-0.62) -- (-0.3,-0.3) -- (-0.3,0.3) -- (-0.04,0.62);
        \draw (0.04,0.68) -- (0.3,1);
        \node at (-0.15,-0.15) {\scalebox{-1}[-1]{B}};
        \node at (0.15,-0.15) {\scalebox{-1}[-1]{A}};
        \node at (-0.15,0.15) {\scalebox{-1}[-1]{D}};
        \node at (0.15,0.15) {\scalebox{-1}[-1]{C}};
        \draw[red] (-1,0) -- (1,0);
        \draw[blue] (0,-1) -- (0,1);
      \end{scope}
    \end{tikzpicture}
  \end{center}
\end{Remark}

\bibliographystyle{plain}
\bibliography{construction}
\end{document}